\documentclass[a4paper,12pt]{article}
\usepackage{amssymb,latexsym,amsmath,enumerate,verbatim,amsfonts,amsthm}
\usepackage[active]{srcltx}

\textwidth 15.0cm \textheight 22.5cm \oddsidemargin 0.1 cm
\evensidemargin 0.1 cm \topmargin -0.3 cm
\newtheorem{lemma}{Lemma}[section]
\newtheorem{definition}{Definition}[section]

\newtheorem{theorem}{Theorem}[section]
\newtheorem{corollary}{Corollary}[section]

\newtheorem{remark}{Remark}[section]
\newtheorem{example}{Example}[section]

\textwidth 17.8cm \textheight 23.6cm
\oddsidemargin -0.2 cm
\evensidemargin -0.2 cm \topmargin -1.5 cm
\begin{document}
\title{{ \textsf{Robust SOS-Convex Polynomial Programs: Exact SDP Relaxations}\footnote{The authors are grateful to the referees and the handling editor for their constructive
comments and helpful suggestions which have contributed to the final preparation of
the paper. Research was partially supported by a grant from the Australian Research Council.}}}

\author{\textsc{V. Jeyakumar}\thanks{Corresponding author. Department of Applied Mathematics, University of New South Wales, Sydney 2052, Australia. E-mail: v.jeyakumar@unsw.edu.au}, \ \ \textsc{G. Li}\thanks{Department of Applied Mathematics, University of New South Wales, Sydney 2052, Australia. E-mail: g.li@unsw.edu.au} \ \ and \ \ \textsc{J. Vicente-P\'erez}\thanks{Department of Applied Mathematics, University of New South Wales, Sydney 2052, Australia. This author has been partially supported by the MICINN of Spain, Grant MTM2011-29064-C03-02. E-mail: jose.vicente@ua.es} }
\date{{\bf Revised Version}: September 28, 2013}

\maketitle

\vspace{-0.3cm}
\begin{abstract}
\noindent This paper studies robust solutions and semidefinite linear
programming (SDP) relaxations of a class of convex polynomial optimization problems in the face of data uncertainty.
 The class of convex optimization problems, called robust SOS-convex polynomial optimization problems, includes robust quadratically constrained convex optimization problems and robust separable convex polynomial optimization problems.
  It establishes sums-of-squares polynomial representations characterizing robust solutions and exact SDP-relaxations of robust SOS-convex polynomial optimization problems under various commonly used uncertainty sets. In particular, the results show that the polytopic and ellipsoidal uncertainty sets, that allow second-order cone re-formulations of robust quadratically constrained optimization problems, continue to permit exact SDP-relaxations for a broad class of robust SOS-convex polynomial optimization problems. %They also yield exact second-order cone relaxation for robust quadratically constrained programs.

\smallskip

\noindent {\footnotesize \textsl{Keywords:} Robust optimization, SOS-convex polynomials, semidefinite programming relaxations, sums of squares polynomials.  }

\end{abstract}

%\quad {\footnotesize \textsl{Mathematics Subject Classification:}    }

\section{Introduction}

\label{SEC1}

% Lasserre hierachy is a powerful approach in solving nonconvex polynomial optimization problem.
% is a powerful approach to solving problem {\rm (\ref{OptimizationProblemA})} globally is the Lasserre's hierarchy of semidefinite programming (SDP) relaxations \cite{Lasserre2009-1, Lasserre2009}.
%
%

Consider the convex polynomial optimization problem
\begin{equation*}
 \begin{array}{crl}
(P_0) & \inf & f(x) \\
     & s.t. & g_i(x)\leq 0,\ i=1,\ldots,m,
\end{array}
\end{equation*}
where $f$ and $g_i$'s are convex polynomials.
The model problem $(P_0)$ admits a hierarchy of semidefinite programming (SDP) relaxations, known as the Lasserre hierarchy of SDP-relaxations. More generally,
the Lasserre hierarchy of SDP relaxations is often used to solve  nonconvex polynomial optimization problems with compact feasible sets \cite{Lasserre,Lasserre2}, and it has finite convergence generically as shown recently in \cite{nie2012}.

In particular, if $f$ and $g_i, i=1,2,\ldots, m$ are SOS-convex polynomials (see Definition \ref{def:SOS_convex}), then $(P_0)$ enjoys an exact SDP-relaxation in the sense that the optimal values of $(P_0)$ and its relaxation problem are equal and the relaxation problem attains its optimum under the Slater constraint qualification (\cite[Theorem 3.4]{Lasserre2}). The class of SOS-convex polynomials is a numerically tractable subclass of convex polynomials and  it contains convex quadratic functions and convex separable polynomials \cite{Parrilo,HeNie10}. The SOS-convexity of a polynomial can be numerically checked by solving a semidefinite programming problem.

The exact SDP-relaxation of a convex optimization problem is a highly desirable feature because SDP problems can  be efficiently solved \cite{BV,las-hand}.  However, the data of real-world convex optimization problems are often uncertain (that is, they are not known exactly at the time of the decision) due to estimation errors, prediction errors or lack of information. Recently, robust optimization approach has emerged as a powerful approach to treat optimization under data uncertainty. It is known that a robust convex quadratic optimization problem under ellipsoidal data uncertainty enjoys exact SDP relaxation as it can be equivalently reformulated as a semi-definite programming problem (see \cite{robust}). In the same vein, Goldfarb and Iyengar \cite{goldfab} have
shown that a robust convex quadratic optimization problems under restricted ellipsoidal data uncertainty can be equivalently reformulated as a second-order cone programming problem.

Unfortunately, an exact SDP relaxation may fail for a robust convex (not necessarily quadratic) polynomial optimization problem under restricted ellipsoidal data uncertainty (see Example \ref{ex:3.1}). This raises the fundamental question: Do some classes of robust convex (not necessarily quadratic) polynomial optimization problems posses exact SDP relaxation? This question has motivated us to study SOS-convex polynomial optimization problems under uncertainty.

In this paper, we study the SOS-convex polynomial optimization problem $(P_0)$ in the face of data uncertainty. This model problem under data uncertainty in the constraints can be captured by the model problem
\begin{equation*}
\begin{array}{crl}
(UP_0) & \inf & f(x) \\
       & s.t. & g_i(x,v_i)\leq 0,\ i=1,\ldots,m,
\end{array}
\label{primal}
\end{equation*}
where $v_i$ is an uncertain parameter and $v_i \in \mathcal{V}_i$ for some compact uncertainty set $\mathcal{V}_i \in \mathbb{R}^{q_i}$, $q_i \in \mathbb{N}$, $f:\mathbb{R}^n\rightarrow\mathbb{R}$ is a SOS-convex polynomial and $g_i:\mathbb{R}^n\times\mathbb{R}^{q_i}\rightarrow\mathbb{R}$, $i=1,\ldots,m$, are functions such that for each $v_i\in\mathbb{R}^{q_i}$, $g_i(\cdot,v_i)$ is a SOS-convex polynomial. As solutions to convex optimization problems are generally sensitive to data uncertainty, even a small uncertainty in the data can affect the quality of the optimal solution of a convex optimization problem, making it far from an optimal solution and unusable from a practical viewpoint. Consequently, how to find robust optimal solutions, that are immunized against data uncertainty, has become an important question in convex optimization and has recently been extensively studied in the literature (see \cite{robust,BN1,jl-siam,jeya-boris,jeya-li-wang}).

Following the robust optimization approach, the robust counterpart of $(UP_0)$, which finds a robust solution of $(UP_0)$ that is immunized against all the possible uncertain scenarios, is given by \begin{equation*}
\begin{array}{crl}
(P) & \inf & f(x) \\
     & s.t. & g_i(x,v_i)\leq 0,\ \forall v_i\in \mathcal{V}_i,i=1,\ldots,m,
\end{array}
\label{robust}
\end{equation*}
and is called a \textit{robust SOS-convex polynomial optimization problem} or called simply, \textit{robust SOSCP}. In the robust counterpart, the uncertain inequality constraints are
enforced for all realizations of the uncertainties $v_i\in \mathcal{V}_i, \ i=1,\ldots,m$. A sum of squares (SOS) relaxation problem of $(P)$ with degree $k$ is the model problem
\begin{equation*}
\begin{array}{cr}
(D_k) & \sup\limits_{\mu\in \mathbb{R}, v_i\in \mathcal{V}_i, \lambda_i\geq 0} \left\{\mu\ :\ f(\cdot) +\sum\limits_{i=1}^{m}\lambda_i g_i(\cdot,v_i)-\mu \in \Sigma^2_k \right\} \\
\end{array}
\label{relaxed}
\end{equation*}
where $\Sigma^2_k$ denotes the set of all sum of squares polynomials with degree no larger than $k$. The model $(D_k)$ is, in fact, the sum of squares relaxation of the robust Lagrangian dual, examined recently in \cite{beck,jeya-goberna,jl-siam,jeya-boris,jeya-li-wang}.

The following contributions are made in this paper to robust convex optimization.

\smallskip

I. We first derive a complete characterization of the solution of a robust SOS-convex polynomial optimization problem $(P)$ in terms of sums of squares polynomials under a normal cone constraint qualification that is shown to be the weakest condition for the characterization. We show that the sum of squares characterization can be numerically checked for some classes of uncertainty sets by solving a semidefinite programming problem.

\smallskip

II. We establish that the value of a robust SOS-convex optimization problem (P) can be found by solving a sum-of-squares programming problem. This is done by proving an exact sum-of-squares relaxation of the robust SOS-convex optimization problem $(P)$.

\smallskip

III. Although the sum of squares relaxation problem $(D_k)$ is NP-hard for general classes of uncertainty sets, we prove that, for the classes of polytopic and ellipsoidal uncertainty sets, the relaxation problem can equivalently be re-formulated as a semidefinite programming problem. This shows that these uncertainty sets, which allow second-order cone re-formulations of robust quadratically constrained optimization problems \cite{goldfab}, permit exact SDP relaxations for a broad class of robust SOS-convex optimization problems. The relaxation problem provides an alternative formulation of an exact second-order cone relaxation for the robust quadratically constrained optimization problem, studied in \cite{goldfab}.

\medskip

\noindent The outline of the paper is as follows. Section 2 presents necessary and sufficient conditions for robust optimality and derives SOS-relaxation for robust SOS-convex optimizatin problems. Section 3 provides numerically tractable classes of robust convex optimization problems by presenting exact SDP-relaxations. %Section 4 presents classes of robust SOS-convex programs which enjoy exact second-order cone relaxations. Appendix provides SDP-reformulations of sums of squares polynomials.
\section{Solutions of Robust SOSCPs}

\label{SEC2}

We begin with some definitions and preliminaries on polynomials. We say that a real polynomial $f$ is sum-of-squares \cite{Laurent_survey} if there exist real polynomials $f_j$, $j=1,\ldots,r$, such that $f=\sum_{j=1}^rf_j^2$. The set consisting of all sum of squares real polynomial is denoted by $\Sigma^2$. Moreover, the set consisting of all sum of squares real polynomial with degree at most $d$ is denoted by $\Sigma^2_d$. The space of all real polynomials on $\mathbb{R}^n$
is denoted by $\mathbb{R}[x]$ and the set of all $n\times r$ matrix polynomials is denoted by $\mathbb{R}[x]^{n \times r}$.

\begin{definition}
{\bf (SOS matrix polynomial)} We say a matrix polynomial $F \in \mathbb{R}[x]^{n \times n}$ is a SOS matrix polynomial if $F(x)=H(x)H(x)^T$ where $H(x) \in \mathbb{R}[x]^{n \times r}$ is a matrix polynomial for some $r \in \mathbb{N}$.
\end{definition}
\begin{definition} \label{def:SOS_convex} {\bf (SOS-Convex polynomial} {\cite{HeNie10})}
 A real polynomial $f$ on $\mathbb{R}^n$ is called \textit{SOS-convex} if the Hessian matrix function $F:x\mapsto \nabla^2 f(x)$ is a SOS matrix polynomial.
\end{definition}
Clearly, a SOS-convex polynomial is convex. However, the converse is not true, that is, there exists a convex polynomial which is not SOS-convex \cite{Parrilo}.   It is known that any convex quadratic function and any convex separable polynomial is a SOS-convex polynomial. Moreover, a SOS-convex polynomial can be non-quadratic and non-separable. For instance, $f(x)=x_1^8+x_1^2+x_1x_2+x_2^2$ is a SOS-convex polynomial (see \cite{HeNie10}) which is  non-quadratic and non-separable.
\begin{lemma}[{\bf SOS-Convexity \& sum-of-squares} {\cite[Lemma 8]{HeNie10}}] \label{polysos}
Let $f$ be a SOS-convex polynomial on $\mathbb{R}^n$. If $f(u)=0$ and $\nabla f(u)=0$ for some $u\in\mathbb{R}^n$, then $f$ is a sum-of-squares polynomial.
\end{lemma}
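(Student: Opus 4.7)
The plan is to use Taylor's theorem with integral remainder to expand $f$ around $u$, then exploit the SOS-convexity hypothesis to rewrite the resulting quadratic form in $\nabla^2 f$ as a sum of squares (under the integral sign), and finally show that this integrated expression is itself an SOS polynomial in $x$.

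\textbf{Step 1 (Taylor expansion).} For fixed $x, u \in \mathbb{R}^n$, set $\phi(t) = f(u + t(x-u))$. Then $\phi(0) = f(u) = 0$, $\phi'(0) = \nabla f(u)^T(x-u) = 0$, and $\phi''(t) = (x-u)^T \nabla^2 f(u+t(x-u))(x-u)$. Taylor's theorem with integral remainder gives
\begin{equation*}
f(x) \;=\; \phi(1) \;=\; \int_0^1 (1-t)\,(x-u)^T \nabla^2 f(u+t(x-u))(x-u)\, dt.
\end{equation*}

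\textbf{Step 2 (Use SOS-convexity).} By Definition \ref{def:SOS_convex}, $\nabla^2 f(y) = H(y)H(y)^T$ for some $H \in \mathbb{R}[y]^{n\times r}$. Hence, setting $q(x,t) := H(u+t(x-u))^T(x-u) \in \mathbb{R}[x,t]^r$,
\begin{equation*}
(x-u)^T \nabla^2 f(u+t(x-u))(x-u) \;=\; \|q(x,t)\|^2 \;=\; \sum_{i=1}^r q_i(x,t)^2,
\end{equation*}
so $f(x) = \sum_{i=1}^r \int_0^1 (1-t) q_i(x,t)^2\, dt$.

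\textbf{Step 3 (The integrated square is SOS).} It remains to show that for any single polynomial $p(x,t) \in \mathbb{R}[x,t]$, the integral $\int_0^1 (1-t) p(x,t)^2 dt$ is a sum of squares in $\mathbb{R}[x]$. Writing $p(x,t) = \sum_{j=0}^N p_j(x) t^j$ with $p_j \in \mathbb{R}[x]$, one computes
\begin{equation*}
\int_0^1 (1-t) p(x,t)^2 dt \;=\; \sum_{j,k=0}^{N} c_{jk}\, p_j(x) p_k(x) \;=\; v(x)^T C\, v(x),
\end{equation*}
where $v(x) = (p_0(x),\ldots,p_N(x))^T$ and $C = (c_{jk})$ with $c_{jk} = \int_0^1 (1-t) t^{j+k} dt$. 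For any $a \in \mathbb{R}^{N+1}$,
\begin{equation*}
a^T C a \;=\; \int_0^1 (1-t) \Bigl(\textstyle\sum_j a_j t^j\Bigr)^{\!2} dt \;\ge\; 0,
\end{equation*}
so $C$ is positive semidefinite. Factor $C = R^T R$; then $v(x)^T C v(x) = \|R v(x)\|^2$ is a sum of squares in $\mathbb{R}[x]$.

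\textbf{Conclusion and obstacle.} Summing over the components $q_i$ in Step 2 yields $f$ as a finite sum of SOS polynomials, which is itself SOS. The only nontrivial step is Step 3: one might worry that the integral obscures the SOS structure, but the PSD-ness of the Hilbert-matrix-like kernel $C$ (obtained for free from the integral representation) resolves this cleanly. The rest is standard Taylor calculus.
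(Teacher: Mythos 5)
Your proof is correct. Note that the paper itself offers no proof of this lemma: it is quoted verbatim from Helton and Nie \cite{HeNie10} (their Lemma 8), and your argument is essentially a reconstruction of theirs --- they too expand $f$ by second-order Taylor with integral remainder (in the double-integral form $\int_0^1\!\int_0^s \phi''(t)\,dt\,ds$, which equals your $\int_0^1 (1-t)\,\phi''(t)\,dt$ after exchanging the order of integration), factor the Hessian as $H(y)H(y)^T$ using SOS-convexity, and then show that integrating a square $p(x,t)^2$ against the nonnegative weight $(1-t)\,dt$ on $[0,1]$ preserves the SOS property. Your Step 3 is the only substantive point, and you resolve it exactly as in the source: the matrix $C$ with $c_{jk}=\int_0^1 (1-t)\,t^{j+k}\,dt$ is the Hankel moment matrix of the measure $(1-t)\,dt$ on $[0,1]$, hence positive semidefinite, and the factorization $C=R^TR$ turns $v(x)^T C\, v(x)$ into the explicit sum of squares $\|Rv(x)\|^2$; summing over the components $q_i$ finishes the proof with no gaps.
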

The following existence result for  solutions of a convex polynomial optimization problem will also be useful for our later analysis.
\begin{lemma}[{\bf Solutions of convex polynomial optimization} {\cite[Theorem 3]{BeKl02}}]
\label{minattain}
Let $f_0,f_1,\ldots,f_m$ be convex polynomials on $\mathbb{R}^n$ and let $C:=\left\{x \in \mathbb{R}^n : f_i(x) \leq 0, i=1,\ldots,m\right\}$. If $\inf\limits_{x\in C}f_0(x)>-\infty$  then  $\operatorname{argmin}\limits_{x\in C}f_0(x) \neq \emptyset$.
\end{lemma}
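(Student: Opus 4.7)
The plan is a minimizing-sequence argument combined with induction on the ambient dimension $n$; the base case $n=0$ is trivial. Pick $(x_k)\subseteq C$ with $f_0(x_k)\to\alpha:=\inf_{x\in C}f_0(x)$. If $(x_k)$ admits a bounded subsequence, continuity of each $f_i$ provides a limit point $x^{\ast}\in C$ with $f_0(x^{\ast})=\alpha$, and we are done. Otherwise $\|x_k\|\to\infty$; after passing to a further subsequence, $d:=\lim_k x_k/\|x_k\|$ exists with $\|d\|=1$.

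The next step is to show that $d$ is a direction of recession of $C$ along which $f_0$ is constant from every feasible point. Fix $x_0\in C$ and $t\geq 0$. For $k$ large enough that $\|x_k\|>t$, the convex combination
\[
y_k:=\Big(1-\tfrac{t}{\|x_k\|}\Big)x_0+\tfrac{t}{\|x_k\|}\,x_k
\]
lies in $C$ and converges to $x_0+td$. Convexity of the $f_i$ together with $f_i(x_0),f_i(x_k)\leq 0$ yields $f_i(y_k)\leq 0$, so in the limit $f_i(x_0+td)\leq 0$; the analogous estimate for $f_0$ combined with $f_0(x_k)\to\alpha$ yields $f_0(x_0+td)\leq f_0(x_0)$. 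Since $t\mapsto f_0(x_0+td)$ is a univariate convex polynomial bounded above on $[0,\infty)$, it must be constant on all of $\mathbb{R}$, and hence $f_0$ takes the value $f_0(x_0)$ on the whole line $x_0+\mathbb{R}d$.

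Finally I would reduce to a convex polynomial problem in dimension $n-1$ and invoke the inductive hypothesis. Fix an affine hyperplane $H$ transversal to $d$; the restrictions of $f_0$ and of each $f_i$ to $H$ are again convex polynomials, so the restricted feasible set $C\cap H$ is again of the form required by the lemma. A minimizing sequence in $H$ for the reduced problem is built by sliding each $x_k$ along $-d$: when the entire line $x_k+\mathbb{R}d$ lies in $C$, one projects $x_k$ orthogonally onto $H$ along $d$ (feasibility and $f_0$-value are both preserved by the constancy just established); when the line is obstructed on the $-d$ side, one slides $x_k$ maximally until it reaches the boundary of $C$, landing on an exposed face of some $\{f_{i}\leq 0\}$. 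The main obstacle is this slicing step: one must verify that the slid sequence is still minimizing and that its asymptotic behaviour in $H$ is governed by a strictly lower-dimensional instance of the same problem, which requires careful bookkeeping of the subspace of constancy directions of $f_0$ together with the closedness of $C$ as the intersection of sublevel sets of continuous convex functions. Once this is done, the inductive hypothesis furnishes a minimizer in $H$, which lifts (via the constancy of $f_0$ along $d$) to the desired minimizer in $C$.
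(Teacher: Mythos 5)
First, note that the paper does not actually prove Lemma 2.2: it is imported verbatim from Belousov and Klatte \cite[Theorem 3]{BeKl02} (a Frank--Wolfe type theorem for convex polynomial programs), so your attempt must be judged against that known, genuinely nontrivial result rather than against an in-paper argument. Your opening moves are sound: the bounded case, the extraction of the asymptotic direction $d$, and the proof that $d$ is a recession direction of $C$ with $f_i(x_0+td)\leq 0$ and $f_0(x_0+td)\leq f_0(x_0)$ for all $x_0\in C$ and $t\geq 0$, are all correct. One local slip: a univariate convex polynomial bounded above on $[0,\infty)$ need \emph{not} be constant ($t\mapsto -t$ is a counterexample); you must also use that the ray $x_0+\mathbb{R}_{+}d$ lies in $C$, so that $f_0$ is bounded below on it by $\alpha>-\infty$, and then ``degree at most one with nonpositive slope'' combined with boundedness below forces constancy. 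That repair is routine.

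The genuine gap is the slicing/induction step, precisely the step you yourself flag as ``the main obstacle,'' and it does not merely need bookkeeping --- as described, it fails. Two things go wrong. First, the slid sequence does not live in $H$: the points whose line is obstructed on the $-d$ side land on sets of the form $\{f_i=0\}$, which are not affine hyperplanes, and imposing $f_i=0$ as a constraint destroys the convex-polynomial format required by the lemma (it amounts to adding the concave constraint $-f_i\leq 0$), so no strictly lower-dimensional instance of the same problem has actually been exhibited. Second, the reduced problem on an arbitrary transversal hyperplane can simply have the wrong value, so even if the inductive hypothesis produced a minimizer on $C\cap H$, it would not lift to a minimizer of the original problem. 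Concretely, take $n=2$, $f_0(x)=(x_1-1)^2$, $f_1(x)=x_1^2-x_2$, and the minimizing sequence $x_k=(1-1/k,\,k)$, so that $d=(0,1)$; choosing $H=\{x: x_2=0\}$ gives $C\cap H=\{(0,0)\}$ and $\inf_{C\cap H}f_0=1$, whereas $\inf_C f_0=0$. A choice of $H$ that preserves the value would have to be placed ``far enough along $d$,'' i.e., using information about where minimizers sit, which is circular. This deferred step is exactly where the difficulty of the theorem is concentrated: the statement is false for general smooth convex data (e.g. $f_0(x)=e^{-x_1}+x_2^2$ with trivial constraints has infimum $0$, never attained), so polynomiality must be exploited in a global, structural way, which is what the machinery of \cite{BeKl02} supplies. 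As written, your proposal is an incomplete reduction rather than a proof.
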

We note that it is possible to reduce a convex polynomial optimization problem to a quadratic optimization problem by introducing new variables. For example, $\min_{x \in \mathbb{R}}\{x^2:x^4 \le 1\}$ can be
converted to a quadratic optimization problem $\min_{(x,t) \in \mathbb{R}^2}\{x^2:x^2 \le 1, t=x^2\}$. However, introducing new variables will result in a problem which may not satisfy the required convexity.

Recall that for a convex set $A\subset\mathbb{R}^{m}$, the normal cone of  $A$ at $x\in A$ is given by $N_A(x):=\left\{v\in \mathbb{R}^n : v^T(y-x)\leq 0,\,\forall y\in A\right\}.$
Let $F:=\left\{x : g_i(x,v_i)\leq 0\ \forall v_i\in \mathcal{V}_i, i=1,\ldots,m\right\} \neq \emptyset$. We say that the \emph{normal cone condition} holds for $F$ at $x\in F$ provided that $$ N_F(x) =  \left\{\sum_{i=1}^{m} \lambda_i \nabla_x g_i(x,v_i) : \lambda_i\geq 0, v_i\in \mathcal{V}_i, \lambda_i g_i(x,v_i)=0\right\},$$
where $\nabla_x$ denotes the gradient with respect to the variable $x$.

It is known from \cite{jl-siam} that the normal cone condition is guaranteed by the following robust Slater condition, $\left\{x\in\mathbb{R}^n : g_i(x,v_i)< 0, \ \forall v_i\in \mathcal{V}_i, i=1,\ldots,m\right\} \neq \emptyset.$
On the other hand, the normal cone condition is, in general, weaker than the robust Slater condition.

In the following theorem we first prove that the normal cone condition guarantees a robust solution characterization involving sums-of-squares representations for robust SOSCPs.

\begin{theorem}[{\bf Sum-of-squares characterization of solutions}]
\label{alternative2}
Let $f:\mathbb{R}^n\rightarrow\mathbb{R}$ be a SOS-convex polynomial and let $g_i:\mathbb{R}^n\times\mathbb{R}^{q_i}\rightarrow\mathbb{R}$, $i=1,\ldots,m$, be functions such that for each $v_i\in\mathbb{R}^{q_i}$, $g_i(\cdot,v_i)$ is a SOS-convex polynomial with degree at most $d_i$. Let $\mathcal{V}_i\subset \mathbb{R}^{q_i}$ be compact and $F:=\left\{x\in\mathbb{R}^n : g_i(x,v_i)\leq 0\ \forall v_i\in \mathcal{V}_i, i=1,\ldots,m\right\} \neq \emptyset$. Suppose that ${\rm argmin}_{x\in F}f(x)\neq\emptyset$ and the normal cone condition holds at $x^*\in F$. Then,
$x^*$ is a minimizer of $\min_{x \in F}f(x)$ if and only if $(\exists\ \bar{v}_i\in \mathcal{V}_i, \, \lambda_i \in \mathbb{R}_+, i=1,\ldots,m, \, \sigma_0\in \Sigma^2_{k_0})$ $(\forall \, x \in \mathbb{R}^n)$ \begin{equation}\label{eq:py}f(x)-f(x^*)+\sum_{i=1}^{m}\lambda_i g_i\left(x,\bar{v}_i\right) = \sigma_0(x),\end{equation} where $k_0$ is the smallest even number such that $k_0 \geq \max\left\{\deg f, \max_{1 \leq i \leq m}d_i\right\}$.
%the following statements are equivalent:
%\begin{enumerate}
%	\item[{\rm(\emph{i})}] $f(x^*) = \min\limits_{x\in F}f(x)$.
%	\item[{\rm(\emph{ii})}] $(\exists\ \bar{v}_i\in \mathcal{V}_i, \, \lambda_i \in \mathbb{R}_+, i=1,\ldots,m, \, \sigma_0\in \Sigma^2_{k_0})$ $(\forall \, x \in \mathbb{R}^n)$ $f(x)-f(x^*)+\sum_{i=1}^{m}\lambda_i g_i\left(x,\bar{v}_i\right) = \sigma_0(x),$ where $k_0$ is the smallest even number such that $k_0 \geq \max\left\{\deg f, \max_{1 \leq i \leq m}d_i\right\}$.
%\end{enumerate}
\end{theorem}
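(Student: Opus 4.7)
The plan is to reduce the theorem to Lemma \ref{polysos} by constructing an appropriate SOS-convex auxiliary polynomial whose zero lies at $x^*$ together with its gradient.

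For the necessity (``only if'') direction, I would start from the convex optimization optimality condition $-\nabla f(x^*) \in N_F(x^*)$, which holds because $f$ is convex and $x^*$ minimizes $f$ over the convex set $F$. Invoking the normal cone condition at $x^*$, this yields multipliers $\lambda_i \geq 0$ and parameters $\bar v_i \in \mathcal V_i$ with the complementarity $\lambda_i g_i(x^*,\bar v_i)=0$ and the stationarity
\[
\nabla f(x^*) + \sum_{i=1}^{m}\lambda_i \nabla_x g_i(x^*,\bar v_i)=0.
\]
Now define the auxiliary polynomial
\[
h(x) := f(x)-f(x^*) + \sum_{i=1}^{m}\lambda_i\, g_i(x,\bar v_i).
\]
I would verify three things about $h$: (i) $h$ is SOS-convex, because $\nabla^2 h = \nabla^2 f + \sum_i \lambda_i \nabla_x^2 g_i(\cdot,\bar v_i)$ is a nonnegative combination of SOS matrix polynomials, hence still an SOS matrix polynomial; (ii) $h(x^*)=0$ by complementarity; and (iii) $\nabla h(x^*)=0$ by the stationarity identity above. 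Lemma \ref{polysos} then gives that $h$ itself is a sum of squares. Finally, since $\deg h \le \max\{\deg f, \max_i \deg g_i(\cdot,\bar v_i)\} \le \max\{\deg f, \max_i d_i\}$ and $h$ being SOS forces an even degree, we get $\deg h \le k_0$, so $\sigma_0 := h \in \Sigma^2_{k_0}$, which is exactly \eqref{eq:py}.

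The sufficiency (``if'') direction is a one-line weak-duality check: for any $x\in F$ the chosen $\bar v_i$'s belong to $\mathcal V_i$, so $g_i(x,\bar v_i)\le 0$; using $\lambda_i\ge 0$ and $\sigma_0(x)\ge 0$ in \eqref{eq:py} yields
\[
f(x)-f(x^*) = \sigma_0(x) - \sum_{i=1}^{m}\lambda_i g_i(x,\bar v_i) \ge 0,
\]
so $x^*$ minimizes $f$ on $F$.

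The only delicate step is the necessity direction, and within it the crucial ingredient is Lemma \ref{polysos}: once it is recognized that the Lagrangian-like polynomial $h$ is SOS-convex and has a zero of its value and gradient at $x^*$, the SOS representation is free. The rest (KKT-style multipliers from the normal cone condition, convex-combination preservation of SOS-convexity, and tracking the degree) is routine.
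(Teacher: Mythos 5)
Your proposal is correct and follows essentially the same route as the paper's own proof: the necessity direction builds the same Lagrangian-type polynomial (the paper's $L$, your $h$), derives its zero value and zero gradient at $x^*$ from the normal cone condition with complementarity, and invokes Lemma \ref{polysos} with the same degree-parity bookkeeping, while the sufficiency direction is the same one-line weak-duality argument. The only difference is that you spell out why nonnegative combinations of SOS matrix polynomials remain SOS matrix polynomials, a point the paper treats as immediate.
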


\begin{proof}

[(if part)]  It easily follows from the fact that $f(x)-f(x^*)=\sigma_0(x)-\sum_{i=1}^{m}\lambda_i g_i\left(x,\bar{v}_i\right) \geq 0$ for all $x\in F$ as $\sigma_0(x) \ge 0$ and $\lambda_i \ge 0$.
\smallskip
[(only if part)] If $f(x^*) = \min_{x\in F}f(x)$, then, by the optimality condition of convex optimization, $-\nabla f(x^*) \in N_F(x^*)$. By the normal cone condition, there exist $\bar{v}_i\in \mathcal{V}_i$, $\lambda_i\geq 0$, $i=1,\ldots,m$, with $\lambda_i g_i(x^*,\bar{v}_i) = 0$, such that $-\nabla f(x^*) = \sum_{i=1}^{m} \lambda_i \nabla g_i(x^*,\bar{v}_i)$. Let $L(x):=f(x)-f(x^*)+\sum_{i=1}^{m}{\lambda_i g_i(x,\bar{v}_i)}$, for $x\in\mathbb{R}^n$. Observe that $L(x^*)=0$ and $\nabla L(x^*)=0$. Clearly, $L$ is SOS-convex since $f$ and $g(\cdot,\bar{v}_i)$ are all SOS-convex. So, Lemma \ref{polysos} guarantees that $L$ is a sum of squares polynomial. Moreover, the degree of $L$ is not larger than $k_0$. So, there exists $\sigma_0 \in\Sigma^2_{k_0}$ such that
$f(x)-f(x^*)+\sum_{i=1}^{m}{\lambda_i g_i\left(x,\bar{v}_i\right)} = \sigma_0(x)  \quad \forall x\in\mathbb{R}^n.$ Thus, the conclusion follows.
\end{proof}

Next, we show that the normal cone condition is indeed a characterization for the robust solution characterization, in the sense that,  if the normal cone condition fails at some feasible point then there exists a SOS-convex real polynomial $f$ such that the robust solution characterization fails.
\begin{theorem}[{\bf Weakest qualification for solution characterization}]
\label{charact2}
Let $g_i:\mathbb{R}^n\times\mathbb{R}^{q_i}\rightarrow\mathbb{R}$, $i=1,\ldots,m$, be functions such that for each $v_i\in\mathbb{R}^{q_i}$, $g_i(\cdot,v_i)$ is a SOS-convex polynomial with degree at most $d_i$. Let $F:=\left\{x\in\mathbb{R}^n : g_i(x,v_i)\leq 0\ \forall v_i\in \mathcal{V}_i, i=1,\ldots,m\right\} \neq \emptyset$ and let $\mathcal{V}_i\subset \mathbb{R}^{q_i}$ be compact. Then, the following statements are equivalent:
\begin{enumerate}
	\item[{\rm (i)}] For each SOS-convex real polynomial $f$ on $\mathbb{R}^n$ with ${\rm argmin}_{x\in F}f(x)\neq\emptyset$,
{$f(x^*) = \min\limits_{x\in F}f(x)\ \Leftrightarrow \ \left[\exists\,\bar{v}_i\in \mathcal{V}_i, \lambda_i\geq 0  :  f(\cdot)-f(x^*)+\sum_{i=1}^{m}\lambda_i g_i\left(\cdot,\bar{v}_i\right) \in \Sigma^2_{k_0} \right]$}
where $k_0$ is the smallest even number such that $k_0 \geq \max\left\{\deg f, \max_{1 \leq i \leq m}d_i\right\}$.
	\item[{\rm (ii)}] $N_F(x) =  \left\{\sum_{i=1}^{m} \lambda_i \nabla_x g_i(x,v_i) : \lambda_i\geq 0, v_i\in \mathcal{V}_i, \lambda_i g_i(x,v_i)=0\right\}$,  for all $x\in F$.
\end{enumerate}
\end{theorem}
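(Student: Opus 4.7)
The plan is to handle the two implications very differently. The direction (ii)$\Rightarrow$(i) is essentially already done by Theorem \ref{alternative2}: the ``only if'' direction there used exactly the normal cone condition to extract multipliers and scenarios, and the ``if'' direction needs no qualification. So once I verify that the hypotheses of Theorem \ref{alternative2} are met for each SOS-convex $f$ with nonempty argmin, I can quote it directly.

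For the nontrivial direction (i)$\Rightarrow$(ii), my plan is a contrapositive/constructive argument. First I would record that the inclusion
\[
\Bigl\{\textstyle\sum_{i=1}^m\lambda_i\nabla_x g_i(\bar x,v_i):\lambda_i\geq 0,\ v_i\in\mathcal V_i,\ \lambda_i g_i(\bar x,v_i)=0\Bigr\}\subseteq N_F(\bar x)
\]
always holds at any $\bar x\in F$ (this follows because $\nabla_x g_i(\bar x,v_i)^T(y-\bar x)\le g_i(y,v_i)-g_i(\bar x,v_i)\le 0$ whenever $\lambda_i g_i(\bar x,v_i)=0$ and $y\in F$, by convexity of $g_i(\cdot,v_i)$). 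Hence, if (ii) fails there must exist $\bar x\in F$ and $\bar\xi\in N_F(\bar x)$ which is \emph{not} in the right-hand set.

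Now I would choose the linear polynomial
\[
f(x):=-\bar\xi^T(x-\bar x),
\]
which is trivially SOS-convex (Hessian is zero) and, because $\bar\xi\in N_F(\bar x)$, satisfies $f(y)\ge f(\bar x)$ for all $y\in F$. So $\bar x\in\mathrm{argmin}_{x\in F}f$. Assuming (i), there would exist $\bar v_i\in\mathcal V_i$, $\lambda_i\ge 0$ and $\sigma_0\in\Sigma^2_{k_0}$ with
\[
f(x)-f(\bar x)+\sum_{i=1}^m\lambda_i g_i(x,\bar v_i)=\sigma_0(x),\qquad x\in\mathbb R^n.
\]
Evaluating at $x=\bar x$ and noting that each $\lambda_i g_i(\bar x,\bar v_i)\le 0$ while $\sigma_0(\bar x)\ge 0$ forces $\sigma_0(\bar x)=0$ and $\lambda_i g_i(\bar x,\bar v_i)=0$ for every $i$. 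Since $\sigma_0$ is a sum of squares vanishing at $\bar x$, it attains its global minimum there and so $\nabla\sigma_0(\bar x)=0$. Differentiating the identity and evaluating at $\bar x$ then yields
\[
-\bar\xi+\sum_{i=1}^m\lambda_i\nabla_x g_i(\bar x,\bar v_i)=0,
\]
together with $\lambda_i g_i(\bar x,\bar v_i)=0$ — exactly representing $\bar\xi$ in the form we excluded, a contradiction.

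I expect no serious obstacle here; the cleanest subtlety is just to be explicit about the universally valid ``easy'' inclusion of the normal cone so that the failure of (ii) produces an element $\bar\xi$ in $N_F(\bar x)$ that is missing from the right-hand set, which is precisely what powers the construction. The choice of $f$ as a linear function is what keeps the example within the SOS-convex class automatically and avoids any degree/complementarity gymnastics with $k_0$.
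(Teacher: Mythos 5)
Your proposal is correct and follows essentially the same route as the paper's own proof: the direction (ii)$\Rightarrow$(i) is delegated to Theorem \ref{alternative2}, and the direction (i)$\Rightarrow$(ii) uses the same linear test function $f(x)=-\bar\xi^T(x-\bar x)$, evaluation at $\bar x$ to force complementarity and $\sigma_0(\bar x)=0$, and the vanishing gradient $\nabla\sigma_0(\bar x)=0$ to recover the multiplier representation of $\bar\xi$. The only cosmetic differences are your contrapositive phrasing (the paper argues the inclusion directly for an arbitrary $w\in N_F(x^*)$) and your explicit verification of the always-valid reverse inclusion, which the paper simply asserts.
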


\begin{proof}
It suffices to show that (\emph{i}) $\Rightarrow$ (\emph{ii}) since the converse statement has been already shown in Theorem \ref{alternative2}. In fact, we just need to show
$ N_F(x) \subset \{\sum_{i=1}^{m} \lambda_i \nabla_x g_i(x,v_i) : \lambda_i\geq 0, v_i\in \mathcal{V}_i, \lambda_i g_i(x,v_i)=0\},$
for any $x\in F$, since the converse inclusion always holds. Let $x^*\in F$ be arbitrary. If
$ w\in N_F(x^*)$ then $-w^T(x-x^*)\geq 0$ for all $x\in F$. Let $f(x):=-w^T(x-x^*)$. Then, $\min_{x\in F}f(x) = f(x^*) = 0$. Since any affine function is SOS-convex, applying (\emph{i}), there exist $\bar{v}_i\in \mathcal{V}_i$, $\lambda_i\geq 0$, for $i=1,\ldots,m$, and $\sigma_0\in \Sigma^2_{k_0}$ such that, for all $x\in \mathbb{R}^n$,
\begin{equation}
 -w^T(x-x^*) + \sum_{i=1}^{m}\lambda_i g_i\left(x,\bar{v}_i\right) = \sigma_0(x) \geq 0.
\label{equ2}
\end{equation}
Letting $x=x^*$, we see that $\sum_{i=1}^{m}\lambda_i g_i\left(x^*,\bar{v}_i\right) \geq 0$. This together with $x\in F$ implies $\lambda_i g_i\left(x^*,\bar{v}_i\right) = 0$, $i=1,\ldots,m$. So, \eqref{equ2} implies that $\sigma_0(x^*)=0$ and $0=\nabla\sigma_0(x^*) = -w+\sum_{i=1}^{m}\lambda_i g_i\left(x^*,\bar{v}_i\right)$. Then,
$w\in \{ \sum_{i=1}^{m} \lambda_i \nabla_x g_i(x^*,v_i) : \lambda_i\geq 0, v_i\in \mathcal{V}_i, \lambda_i g_i(x^*,v_i)=0 \}.$
Thus, the conclusion follows.
\end{proof}

It is worth noting that the sum-of-squares condition characterizing the solution of a robust SOSCP can be numerically verified by solving semi-definite programming problems for some uncertainty sets. We illustrate this with two simple examples: (1) SOS-convex constraints with finite uncertainty sets; (2) quadratic constraints with spectral norm data uncertainty sets. The numerical tractability of more general classes of robust SOS-convex optimization problems under sophisticated classes of uncertainty sets  will be discussed later on in Section 3.

\subsection*{SOS-Convex constraints and finite uncertainty sets}
Suppose that $\mathcal{V}_i=\left\{v_i^1,\ldots,v_i^{s_i}\right\}$ for any $i\in\left\{1,\ldots,m\right\}$. Then, the robust SOS-convex polynomial optimization problem takes the form
\begin{equation*}
(P_1) \ \ \ \min_{x \in \mathbb{R}^n} \{f(x): g_i(x,v_i^j)\leq 0,\ \forall j=1,\ldots,s_i,\ \forall i=1,\ldots,m\},
% \label{primal}
\end{equation*}
and the minimum is attained in virtue of Lemma \ref{minattain}. Let $x^*$ be a feasible solution of $(P_1)$ and suppose that the normal cone condition is satisfied at $x^*$.

Let $k_0=\max_{1 \le i \le m, 1 \le j \le s_i}\{{\rm deg}f,{\rm deg}g_i(\cdot,v_i^j)\}$. In this case, (\ref{eq:py}) in Theorem \ref{alternative2} %becomes`$\exists\ \bar{v}_i\in \mathcal{V}_i,\ \exists\ \bar{\lambda}_i\geq 0,\ i=1,\ldots,m,\ \exists\ \sigma_0\in \sum^2$ such that $f(x)-f(x^*)+\sum_{i=1}^{m}\bar{\lambda}_i g_i\left(x,\bar{v}_i\right) = \sigma_0(x) \mbox{\ for all\ } x\in\mathbb{R}^n$''
is equivalent to the condition that there exist ${\lambda}_i^j \geq 0$, $i=1,\ldots,m, j=1,\ldots,s_i$, and $\sigma_0\in \Sigma_k^2$ such that
\begin{equation}
\label{eq:0}
f(x)-f(x^*)+\sum_{1 \le i \le m, 1 \le j \le s_i}{\lambda}_i^j g_i\left(x, {v}_i^j\right) = \sigma_0(x) \mbox{\ for all\ } x\in\mathbb{R}^n.
\end{equation}
Indeed, it is easy to see that (\ref{eq:py}) in Theorem \ref{alternative2} implies \eqref{eq:0}. On the other hand, \eqref{eq:0} immediately gives us that $x^*$ is a solution of $(P_1)$ which implies (\ref{eq:py}). Therefore, a solution
of a SOS-convex polynomial optimization problem under finite uncertainty sets can be efficiently verified by solving a semidefinite programming problem. %As described in Appendix (see Lemma

\subsection*{Quadratic constraints under spectral norm uncertainty}
Consider the following SOS-convex polynomial optimization problem with quadratic constraints under spectral norm uncertainty:
\begin{equation*}
  \min_{x \in \mathbb{R}^n} \{f(x) :x^TB_ix + 2b_i^T x + \beta_i \leq 0,\ i=1,\ldots,m\},
\end{equation*}
where, $b_i \in \mathbb{R}^n$ and $\beta_i \in \mathbb{R}$, the  data  $(B_i,b_i,\beta_i) \in S^n \times \mathbb{R}^n \times \mathbb{R}$, $i=1,\ldots,m$, are uncertain and belong to the spectral norm uncertainty set
 $$\mathcal{V}_i=\{(B_i,b_i,\beta_i) \in S^n \times \mathbb{R}^n \times \mathbb{R} : \|\left(\begin{array}{cc}
B_i & b_i\\
b_i^T & \beta_i
\end{array} \right)-\left(\begin{array}{cc}
\overline{B}_i & \overline{b}_i\\
\overline{b}_i^T & \overline{\beta}_i
\end{array} \right)\|_{\rm spec} \leq \varepsilon_i\},$$ for some $\varepsilon_i \ge 0$, $\overline{B}_i \succeq 0$, $\overline{b}_i \in \mathbb{R}^n$ and $\overline{\beta}_i \in \mathbb{R}$.
Here, $S^{n}$ denotes the space of symmetric $n\times n$ matrices and $\|\cdot\|_{\rm spec}$ denotes the spectral norm defined by
$\|M\|_{\rm spec}=\sqrt{\lambda_{\max}(M^TM)}$ where $\lambda_{\max}(C)$ is the maximum eigenvalue of the matrix $C$. The corresponding robust counterpart of the above problem is
\begin{equation*}
\begin{array}{crl}
(P_2) & \min & f(x) \\
    & s.t. & x^TB_ix + 2b_i^T x + \beta_i \leq 0, \ \forall \, (B_i,b_i,\beta_i) \in \mathcal{V}_i, \ i=1,\ldots,m,
\end{array}
\end{equation*}
Let $k=\max\{{\rm deg} f,2\}$.
In this case, (\ref{eq:py}) in Theorem \ref{alternative2} becomes $(\exists\,B_i\in \mathcal{V}_i,\,\lambda \in \mathbb{R}^m_+,\,\sigma_0\in\Sigma_k^2)$ $(\forall x \in \mathbb{R}^n)$
 $f(x)-f(x^*)+\sum_{i=1}^{m}\lambda_i (x^TB_ix + 2b_i^T x + \beta_i ) = \sigma_0(x)$.
This, in turn, is equivalent to the condition that there exist $\lambda_i \geq 0$, $i=1,\ldots,m$, and $\sigma_0\in \Sigma_k^2$ such that
\begin{equation}
\label{eq:001}
f(x)-f(x^*)+\sum_{i=1}^{m}\lambda_i (x^T(\overline{B}_i+\varepsilon_i I_n) x + 2\overline{b}_i^T x + \overline{\beta}_i+\varepsilon_i ) = \sigma_0(x) \mbox{\ for all\ } x\in\mathbb{R}^n.
\end{equation}
In fact, \eqref{eq:001} implies (\ref{eq:py})  as $(\overline{B}_i+\varepsilon_i I_n,\overline{b}_i,\overline{\beta}_i+\varepsilon_i) \in \mathcal{V}_i$. On the other hand, note that, for all $(B_i,b_i,\beta_i) \in \mathcal{V}_i$,  $\left(\begin{array}{cc}
\overline{B}_i+\varepsilon_i I_n & \overline{b}_i\\
\overline{b}_i^T & \beta_i+\varepsilon_i
\end{array} \right)-\left(\begin{array}{cc}
B_i & b_i\\
b_i^T & \beta_i
\end{array} \right)$ is a positive semidefinite matrix, and hence, for each $i=1,\ldots,m$, $$h_i(x):=  \left(\begin{array}{c}
x\\
1
\end{array} \right)^T(\left(\begin{array}{cc}
\overline{B}_i+\varepsilon_i I_n & \overline{b}_i\\
\overline{b}_i^T & \beta_i+\varepsilon_i
\end{array} \right)-\left(\begin{array}{cc}
B_i & b_i\\
b_i^T & \beta_i
\end{array} \right))\left(\begin{array}{c}
x\\
1
\end{array} \right)$$ is sum-of-squares.
So, (\ref{eq:py}) implies that there exist $\lambda_i \ge 0$ and $(B_i,b_i,\beta_i) \in \mathcal{V}_i$,
such that \begin{eqnarray*}
&  & f(x)-f(x^*)+\sum_{i=1}^{m}\lambda_i (x^T(\overline{B}_i+\varepsilon_i I_n) x + 2\overline{b}_i^T x + \overline{\beta}_i+\varepsilon_i ) \\
&= & f(x)-f(x^*)+\sum_{i=1}^{m}\lambda_i (x^TB_ix + 2b_i^T x + \beta_i )+\sum_{i=1}^mh_i(x),  %\in \Sigma_k^2,
\end{eqnarray*}
is a sum-of-squares polynomial with degree at most $k$. Therefore, a solution
of a quadratic optimization problem under spectral norm data uncertainty can also be efficiently verified by solving a semidefinite programming problem.
%statement $(ii)$ in Theorem \ref{alternative2} implies that
%For a semi-definite programming re-formulation of \eqref{eq:001}, see the Appendix later in the paper.
%\medskip

Next, we examine how to find
the optimal value of a robust SOSCP by solving a sum of squares relaxation problem. In particular,  the corresponding sum of squares relaxation
problem can often be equivalently reformulated as  semi-definite programming problems under various commonly used data uncertainty sets.

\begin{theorem}[{\bf Exact sum of squares relaxation}]
\label{alternative}
Let $f:\mathbb{R}^n\rightarrow\mathbb{R}$ be a SOS-convex polynomial.  Let $g_i:\mathbb{R}^n\times\mathbb{R}^{q_i}\rightarrow\mathbb{R}$, $i=1,\ldots,m$, be functions such that for each $x \in \mathbb{R}^n$, $g_i(x,\cdot)$ is
concave, $g_i(\cdot,v_i)$ is a SOS-convex polynomial for each $v_i\in \mathcal V_i$ with degree at most $d_i$, and $\mathcal V_i \subset \mathbb{R}^{q_i}$ are convex compact sets.  Let
$\left\{x\in\mathbb{R}^n : g_i(x,v_i) < 0\ \forall v_i\in \mathcal{V}_i, i=1,\ldots,m\right\} \neq \emptyset$. Then, we have
$$\inf\{f(x): g_i(x,v_i)\leq 0\ \forall v_i\in \mathcal{V}_i, i=1,\ldots,m\} = \max_{\lambda_i \geq 0, v_i \in \mathcal{V}_i}\{ \mu:  f(\cdot)+\sum_{i=1}^{m}{{\lambda}_i g_i\left(\cdot,{v}_i\right)}-\mu  \in \Sigma_{k_0}^2 \},$$
where $k_0$ is the smallest even number such that $k_0 \geq \max\left\{\deg f, \max_{1 \leq i \leq m}d_i\right\}$.
\end{theorem}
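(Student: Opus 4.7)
The plan is to establish the two inequalities $\beta \leq \alpha$ and $\beta \geq \alpha$, where $\alpha$ denotes the infimum on the left-hand side and $\beta$ the supremum on the right. The direction $\beta \leq \alpha$ is immediate from weak duality: for any triple $(\mu,\lambda_i,v_i)$ feasible for the right-hand side and any $x$ in the robust feasible set $F$, the SOS membership gives $f(x)+\sum_i \lambda_i g_i(x,v_i)-\mu \geq 0$, and since $\lambda_i \geq 0$ and $g_i(x,v_i)\leq 0$ we deduce $f(x)\geq \mu$, so $\mu\leq\alpha$.

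For the reverse, assume $\alpha>-\infty$ (otherwise $\beta=-\infty$ trivially). Define $G_i(x):=\max_{v_i\in\mathcal{V}_i} g_i(x,v_i)$: the maximum is attained because $\mathcal{V}_i$ is compact and $g_i(x,\cdot)$ is concave and finite, hence continuous; $G_i$ is then a real-valued convex function on $\mathbb{R}^n$, hence continuous. The robust Slater condition specializes to the classical Slater condition for the convex program $\min\{f(x):G_i(x)\leq 0,\ i=1,\ldots,m\}$, so standard convex Lagrangian duality produces multipliers $\lambda^*=(\lambda_1^*,\ldots,\lambda_m^*)\in\mathbb{R}_+^m$ with $\alpha=\inf_{x}\{f(x)+\sum_i \lambda_i^* G_i(x)\}$. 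Introduce the Lagrangian $L(x,v):=f(x)+\sum_{i=1}^m \lambda_i^* g_i(x,v_i)$ on $\mathbb{R}^n\times\mathcal{V}$, where $\mathcal{V}:=\mathcal{V}_1\times\cdots\times\mathcal{V}_m$; it is continuous, convex in $x$, concave in $v$, and $\mathcal{V}$ is convex and compact. Since $\lambda_i^*\geq 0$ we have $\sup_{v\in\mathcal{V}} L(x,v)=f(x)+\sum_i \lambda_i^* G_i(x)$, and Sion's minimax theorem therefore yields
\[
\alpha \;=\; \inf_{x}\sup_{v\in\mathcal{V}} L(x,v) \;=\; \sup_{v\in\mathcal{V}}\inf_{x} L(x,v).
\]
The outer supremum is attained at some $v^*=(v_1^*,\ldots,v_m^*)\in\mathcal{V}$ because $v\mapsto \inf_x L(x,v)$ is upper semicontinuous (infimum of continuous functions) on the compact set $\mathcal{V}$.

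With this $v^*$ in hand, the polynomial $L(\cdot,v^*)$ is SOS-convex (a nonnegative combination of SOS-convex polynomials) and has finite unconstrained infimum $\alpha$. Lemma \ref{minattain} then delivers $x^*\in\mathbb{R}^n$ with $L(x^*,v^*)=\alpha$ and $\nabla_x L(x^*,v^*)=0$, so the SOS-convex polynomial $h(x):=L(x,v^*)-\alpha$ satisfies $h(x^*)=0$ and $\nabla h(x^*)=0$; by Lemma \ref{polysos} it is a sum of squares. Its degree is at most $\max\{\deg f,\max_i d_i\}\leq k_0$, placing it in $\Sigma^2_{k_0}$. Hence $(\alpha,\lambda^*,v^*)$ is feasible for the right-hand problem with $\mu=\alpha$, which simultaneously proves $\beta\geq\alpha$ and that the maximum is attained.

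The main obstacle is the minimax interchange on the noncompact $x$-space $\mathbb{R}^n$; Sion's theorem applies cleanly because the opposite side $\mathcal{V}$ is compact and $L$ is continuous and convex-concave. A second delicate point is attainment of the inner infimum in the saddle representation, which is precisely where the SOS-convex polynomial structure becomes essential and is supplied by Lemma \ref{minattain}.
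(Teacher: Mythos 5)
Your proof is correct and follows essentially the same route as the paper's: weak duality for the easy inequality, then Lagrangian duality under the robust Slater condition, a convex-concave minimax interchange (the paper invokes this without naming Sion's theorem), and finally Lemma \ref{minattain} and Lemma \ref{polysos} to turn the nonnegative SOS-convex Lagrangian into a sum of squares of degree at most $k_0$. The only cosmetic differences are that you handle the product uncertainty set jointly, justify attainment of the outer supremum explicitly via upper semicontinuity, and obtain $h(x^*)=0$ exactly, whereas the paper allows slack $h(x^*)\geq 0$ and absorbs it into the SOS certificate.
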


\begin{proof}
Note that, for any $\lambda_i \geq 0$, $v_i \in \mathcal{V}_i$ with $f(\cdot)+\sum_{i=1}^{m}{{\lambda}_i g_i\left(\cdot,{v}_i\right)}-\mu \in \Sigma_{k_0}^2$ and any point $x \in \mathbb{R}^n$ such that $g_i(x,v_i) \leq 0$ for all $v_i\in \mathcal{V}_i$, one has $f(x) \geq f(x)+\sum_{i=1}^{m}{{\lambda}_i g_i\left(x,{v}_i\right)} \geq \mu$. So, we see that $\inf\{f(x): g_i(x,v_i)\leq 0\ \forall v_i\in \mathcal{V}_i,\ i=1,\ldots,m\} \geq \max_{\lambda_i \geq 0, v_i \in \mathcal{V}_i}\{ \mu:  f(\cdot)+\sum_{i=1}^{m}{{\lambda}_i g_i\left(\cdot,{v}_i\right)}-\mu  \in \Sigma_{k_0}^2 \}.$
\smallskip

To see the reverse inequality, we may assume without loss of generality that $c:=\inf\{f(x): g_i(x,v_i)\leq 0\ \forall v_i\in \mathcal{V}_i,\ i=1,\ldots,m\} \in \mathbb{R}$. % By the robust Slater condition, we have $c:=\inf\{f(x): g_i(x,v_i)\leq 0\ \forall v_i\in \mathcal{V}_i,\ i=1,\ldots,m\} \in \mathbb{R}$.
By the usual convex programming duality and the robust Slater condition,
\begin{eqnarray*}
\inf\{f(x):g_i(x,v_i) \le 0, \, \forall \, v_i \in \mathcal{V}_i\}   &=& \inf\{f(x): \max_{v_i \in \mathcal{V}_i} g_i(x,v_i) \le 0,  i=1,2,\ldots, m\} \\
 = \quad \max\limits_{\substack{\lambda_i\geq 0}}\:\inf_{x\in\mathbb{R}^n}\left\{f(x)+\sum_{i=1}^{m}\lambda_i \max_{v_i \in \mathcal{V}_i}g_i(x,v_i)\right\} & = &  \max\limits_{\substack{\lambda_i\geq 0}}\:\inf_{x\in\mathbb{R}^n} \max_{v_i \in \mathcal{V}_i}\left\{f(x)+\sum_{i=1}^{m}\lambda_i g_i(x,v_i)\right\},
\end{eqnarray*}
where the attainment of the maximum is guaranteed by the robust Slater condition.

Note that $x \mapsto g_i(x,v_i)$ is SOS-convex (and so convex) and $v_i \mapsto g_i(x,v_i)$ is concave. From the convex-concave minimax theorem, we have, for each $\lambda_i \ge 0$,
$$ \inf_{x\in\mathbb{R}^n} \max_{v_i \in \mathcal{V}_i}\left\{f(x)+\sum_{i=1}^{m}\lambda_i g_i(x,v_i)\right\}=\max\limits_{v_i\in \mathcal{V}_i} \inf_{x\in\mathbb{R}^n}\left\{f(x)+\sum_{i=1}^{m}\lambda_i g_i(x,v_i)\right\}. $$
So, $\inf\{f(x):g_i(x,v_i) \le 0, \, \forall \, v_i \in \mathcal{V}_i\} = \max\limits_{v_i\in \mathcal{V}_i, \lambda_i \ge 0} \inf_{x\in\mathbb{R}^n}\left\{f(x)+\sum_{i=1}^{m}\lambda_i g_i(x,v_i)\right\}   . $
Hence,  there exist $\bar{v}_i\in \mathcal{V}_i$ and $\bar{\lambda}_i\geq 0$, for $i=1,\ldots,m$, such that $f(x) + \sum_{i=1}^{m}\bar{\lambda}_i g_i\left(x,\bar{v}_i\right) \geq c$ for all $x\in\mathbb{R}^n.$

Let $h(x):=f(x) +  \sum_{i=1}^{m}{\bar{\lambda}_i g_i\left(x,\bar{v}_i\right)} - c $. Then, $h\geq 0$ and it is also a SOS-convex polynomial, as $f(\cdot)$ and $g\left(\cdot,\bar{v}_i\right)$ are all SOS-convex. So, by Lemma \ref{minattain}, we obtain that $\min_{x\in\mathbb{R}^n}h(x) = h(x^*)$ for some $x^*\in \mathbb{R}^n$. The polynomial $L(x):=h(x)-h(x^*)$ is again SOS-convex. Moreover, $L(x^*)=0$ and $\nabla L(x^*)=0 $. Then, $L$ is a sum of squares polynomial as a consequence of Lemma \ref{polysos}. So we get that
$f(x)+\sum_{i=1}^{m}{\bar{\lambda}_i g_i\left(x,\bar{v}_i\right)} - c - h(x^*) = \sigma_1(x)  \quad  \forall x\in\mathbb{R}^n,$
where $\sigma_1 \in\Sigma^2_{k_0}$ and $k_0$ is the smallest even number such that $k_0 \geq \max\left\{\deg f, \max_{1 \leq i \leq m}d_i\right\}$. Note that a sums-of-squares polynomial must be
of even degree. As $h(x^*) \geq 0$, $\sigma_0(\cdot) := \sigma_1(\cdot) + h(x^*)$ is also a sum of squares with degree at most $k_0$. Therefore, $\sigma_0 \in \Sigma^2_{k_0} $ and
$f(x)+\sum_{i=1}^{m}{\bar{\lambda}_i g_i\left(x,\bar{v}_i\right)} - c  = \sigma_0(x)$ for all  $x\in\mathbb{R}^n.$
Hence, $c \leq \displaystyle \max_{\lambda_i \geq 0, v_i \in \mathcal{V}_i}\{ \mu:  f(\cdot)+\sum_{i=1}^{m}{{\lambda}_i g_i\left(\cdot,{v}_i\right)}-\mu \in \Sigma_{k_0}^2 \}$ and the conclusion follows.
\end{proof}

\begin{remark}[{\bf Intractability of general SOS-relaxation problems}]
In general, finding the optimal value of a robust SOSCP using the SOS-relaxation problem can still be an intrinsically hard problem. For example, consider the following robust convex quadratic
optimization problem:
\begin{eqnarray*}
(P_4) & \inf  & x^TAx + 2a^T x +\alpha   \\
      &  s.t. & g_i(x,v_i) \leq 0, \quad \forall\, v_i^T Q_i^l v_i \leq 1, l=1,\ldots,k, i=1,\ldots,m,
\end{eqnarray*}
where
\begin{equation}
\label{eq:04}
g_i(x,v_i)=\|(\overline{B}_i^0+\sum_{j=1}^s v_i^j \overline{B}_i^j )x\|^2 + 2(\overline{b}_i^0+\sum_{j=1}^s v_i^j \overline{b}_i^j)^Tx + (\overline{\beta}_i^0+\sum_{j=1}^s v_i^j \overline{\beta}_i^j ),
\end{equation}
$v_i=(v_i^1,\ldots,v_i^s)$, $Q_i^l \succ 0$, for $l=1,\ldots,k$, $i=1,\ldots,m$, and $k \geq 2$. It was shown in \cite[Section 3.2.2]{BN1} that checking the robust feasibility of the problem $(P_4)$, is an NP-hard problem even under the robust Slater condition. % Let $g_i(x,v_i)=\|(\overline{B}_i^0+\sum_{j=1}^s v_i^j \overline{B}_i^j )x\|^2 + (\overline{b}_i^0+\sum_{j=1}^s v_i^j \overline{b}_i^j)^Tx+ (\overline{\beta}_i^0+\sum_{j=1}^s v_i^j \overline{\beta}_i^j )$.
By applying Theorem \ref{alternative} with $f(x)=0$, we see that the robust feasibility problem of $(P_4)$ is equivalent to the condition that the optimal value of the following problem is zero:
\[  \sup\limits_{\mu\in \mathbb{R}, v_i^TQ_i^lv_i \le 1, \lambda_i\geq 0} \left\{\mu\ :\ \sum_{i=1}^{m}\lambda_i g_i(\cdot,v_i)-\mu \in \Sigma^2_2 \right\}.  \]
This, in particular, shows that finding the optimal value of a SOS-relaxation of a robust SOSCP, via Theorem \ref{alternative}, is also NP-hard, and so, cannot be equivalently rewritten as a semi-definite programming problem.

Furthermore, observe that for the above intractable case, $v_i \mapsto g_i(x,v_i)$ with $g_i(x,v_i)$ defined in \eqref{eq:04}
%  $ =\|(\overline{B}_i^0+\sum_{j=1}^s v_i^j \overline{B}_i^j )x\|^2 + (\overline{b}_i^0+\sum_{j=1}^s v_i^j \overline{b}_i^j)^Tx+ (\overline{\beta}_i^0+\sum_{j=1}^s v_i^j \overline{\beta}_i^j )$
is not affine.
\end{remark}

A semidefinite programming approximation
scheme for solving a robust nonconvex polynomial optimization problem has been given in \cite{Lasserre_robust} where it was shown that the optimal value
of the robust optimization problem can be approached as close as possible by the optimal value of a sequence of semi-definite programming relaxation problem under mild conditions. In the next section, we show that, in the case of affine data parametrization  (that is, $v_i \mapsto g_i(x,v_i)$ is affine), the optimal value of the robust SOS-convex polynomial optimization problem can be found by solving {\it a single semi-definite programming problem} under two commonly used data uncertainty sets: polytopic uncertainty and ellipsoidal uncertainty.

\setcounter{equation}{0}
\section{Exact SDP-Relaxations \& Affine Parameterizations}
In this section we consider the robust SOSCP under affinely parameterized data uncertainty:
\begin{equation*}
\begin{array}{crl}
(P) & \inf & f(x) \\
     & \mbox{s.t.} & g_i(x,v_i)\leq 0,\ \forall v_i\in \mathcal{V}_i,i=1,\ldots,m,
\end{array}
\end{equation*}
where $f$ is a SOS-convex polynomial and the data is affinely parameterized in the sense that \begin{equation*}
g_i(\cdot,v_i)=g_i^{(0)}(\cdot) + \sum_{r=1}^{q_i} v_i^{(r)}g_i^{(r)}(\cdot),  \quad v_i=(v_i^{(1)},\ldots,v_i^{(t_i)},v_i^{(t_i+1)},\ldots,v_i^{(q_i)}) \in \mathcal{V}_i \subseteq \mathbb{R}_{+}^{t_i} \times \mathbb{R}^{q_i-t_i},
\end{equation*}
where $g_i^{(r)}$, for $r=1,\ldots,t_i$, are SOS-convex polynomials, and $g_i^{(r)}$, for $r=t_i+1,\ldots,q_i$, are affine functions, for all $i=1,\ldots,m$.

In the following, we show, in the case of two commonly used uncertain sets, that the SOS-relaxation is exact and the relaxation problems can be represented as semi-definite programming (SDP) problems
whenever a robust Slater condition is satisfied.

\subsection{Polytopic data uncertainty}
Consider the robust SOSCP with polytopic uncertainty, that is, $t_i=q_i$ and $\mathcal{V}_i=\bar{\mathcal{V}}_i$, where $\bar{\mathcal{V}}_i$ is given by \begin{equation*}
% \label{eq:05}
\bar{\mathcal{V}}_i :=\{v_i=(v_i^{(1)},\ldots,v_i^{(q_i)})\in \mathbb{R}^{q_i}: v_i^{(r)} \geq 0,  A_i v_i = b_i\},
\end{equation*}
for some matrix $A_i=(A_i^{jr}) \in \mathbb{R}^{l_i \times q_i}$ and $b_i=(b_i^j) \in \mathbb{R}^{l_i}$ such that $\bar{\mathcal{V}}_i$ is compact. Examples include the case where $\mathcal{V}_i$ is a simplex, i.e., $ \mathcal{V}_i = \{v_i \in \mathbb{R}^{q_i} : v_i^{(r)} \geq 0, \sum_{r=1}^{q_i}v_i^{(r)}=1\}$ or more generally where $A_i\in \mathbb{R}^{l_i \times q_i}$ and  $\{x \in \mathbb{R}^{q_i}:A_ix=0\} \cap \mathbb{R}_+^{q_i}=\{0\}$.

For the robust SOSCP $(P)$ with polytopic uncertainty sets $\bar{\mathcal{V}}_i$, named $(P^p)$, and each $k \in \mathbb{N}$, the corresponding relaxation problem $(D_k^p)$ can be stated as
\begin{equation}
\label{eq:06}
\begin{array}{ccl}
(D_k^p)  &  \max\limits_{\mu, w_i^r}  &  \mu  \\
         &   s.t.   &   f +  \sum\limits_{i=1}^{m} \sum\limits_{r=0}^{q_i} w_i^r g_i^{(r)} - \mu \in \Sigma^2_{k}  \\
				 &          &   \sum\limits_{r=1}^{q_i} A_{i}^{jr} w_i^r = w_i^0 b_i^j, \ \forall j=1,\ldots,l_i, i=1,\ldots,m, \\
				 &          &   \mu\in\mathbb{R}, w_i^r \geq 0, \ \forall r=0,1,\ldots,q_i, i=1,\ldots,m.
\end{array}
\end{equation}
Let $k_0$ be the smallest even number such that $k_0 \geq \max_{0 \leq r \leq q_i, 1 \leq i \leq m}\{\deg f,\deg g_i^{(r)}\}$. Note that the relaxation problem $(D_k^p)$ can be equivalently rewritten as a semidefinite programming problem.

\begin{theorem}[{\bf Exact SDP-relaxation under polytopic data uncertainty}]\label{cor:1}
Consider the uncertain SOS-convex polynomial optimization problem under polytope data uncertainty $(P^p)$ and its relaxation problem $(D_k^p)$.
% Let $f:\mathbb{R}^n\rightarrow\mathbb{R}$ be a SOS-convex polynomial.
% Let $g_i^{(j)}$ be SOS-convex polynomials, $j=1,\ldots,q_i$, $i=1,\ldots,m$ and let
%\[
%g_i(x,v_i)=g_i^{(0)}(x)+\sum_{r=1}^{q_i} v_i^{(r)} \ g_i^{(r)}(x), \ \ v_i=(v_i^{(1)},\ldots,v_i^{(q_i)}) \in \mathcal{V}_i \subseteq \mathbb{R}^{q_i}.
%\]
Suppose that
$$\left\{ x\in\mathbb{R}^n : g_i^{(0)}(x)+\sum_{r=1}^{q_i}v_i^{(r)}g_i^{(r)}(x)<0\ \forall v_i\in \bar{\mathcal{V}}_i, i=1,\ldots,m\right\} \neq \emptyset.$$
Then, the minimum of $(P^p)$ is attained and $\min(P^p)=\max(D_{k_0}^p)$,
 %\min\limits_{x\in\mathbb{R}^n}\left\{ f(x) : x\in F\right\} = \max\limits_{\substack{ \mu\in\mathbb{R} \\ \alpha_i^j\geq 0}} \left\{\mu : f(\cdot) +  \sum\limits_{i=1}^{m}\sum\limits_{j=1}^{s_i}\alpha_i^j g_i(\cdot,v_i^j) -\mu \in \sum^2_{k_0} \right\},
where $k_0$ is the smallest even number such that $k_0 \geq \max_{0 \leq r \leq q_i, 1 \leq i \leq m}\{\deg f,\deg g_i^{(r)}\}$.
\end{theorem}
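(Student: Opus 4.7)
The plan is to deduce the result from Theorem \ref{alternative} by translating the SOS-relaxation variables $(\lambda_i,v_i)$ into the SDP variables $w_i^r$. First I verify the hypotheses of Theorem \ref{alternative} for $(P^p)$: for each $v_i\in\bar{\mathcal{V}}_i$ the polynomial $g_i(\cdot,v_i)=g_i^{(0)}+\sum_{r=1}^{q_i}v_i^{(r)}g_i^{(r)}$ is a nonnegative combination of SOS-convex polynomials (because $t_i=q_i$ forces $v_i^{(r)}\geq 0$), hence SOS-convex; the map $v_i\mapsto g_i(x,v_i)$ is affine, hence concave; $\bar{\mathcal{V}}_i$ is convex and compact; and the robust Slater condition is exactly the stated hypothesis. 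Attainment of the minimum of $(P^p)$ follows because, $\bar{\mathcal{V}}_i$ being a polytope, the uncertain constraints can be replaced by finitely many convex polynomial inequalities evaluated at the extreme points of $\bar{\mathcal{V}}_i$, after which Lemma \ref{minattain} applies once we know the infimum is finite (guaranteed by Theorem \ref{alternative}). Consequently
$$\min(P^p)=\max_{\substack{\lambda_i\geq 0\\ v_i\in\bar{\mathcal{V}}_i}}\left\{\mu:\;f+\sum_{i=1}^{m}\lambda_i g_i(\cdot,v_i)-\mu\in\Sigma^2_{k_0}\right\}.$$

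The second step is to identify this SOS-relaxation with $(D_{k_0}^p)$ via the substitution $w_i^0:=\lambda_i$ and $w_i^r:=\lambda_i v_i^{(r)}$ for $r=1,\ldots,q_i$. In the forward direction, $v_i\in\bar{\mathcal{V}}_i$ and $\lambda_i\geq 0$ yield $w_i^r\geq 0$ and $\sum_{r=1}^{q_i}A_i^{jr}w_i^r=\lambda_i b_i^j=w_i^0 b_i^j$, while $\sum_{i=1}^{m}\lambda_i g_i(\cdot,v_i)=\sum_{i=1}^{m}\bigl(w_i^0 g_i^{(0)}+\sum_{r=1}^{q_i}w_i^r g_i^{(r)}\bigr)$, so any feasible $(\lambda_i,v_i,\mu)$ produces a feasible $(w_i^r,\mu)$ for $(D_{k_0}^p)$ with the same objective.

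For the reverse direction, given $(w_i^r,\mu)$ feasible for $(D_{k_0}^p)$, I recover $(\lambda_i,v_i)$ case by case. If $w_i^0>0$, set $\lambda_i=w_i^0$ and $v_i^{(r)}=w_i^r/w_i^0$; dividing the linear constraints of $(D_{k_0}^p)$ by $w_i^0$ returns $v_i\in\bar{\mathcal{V}}_i$. If $w_i^0=0$, then the vector $(w_i^1,\ldots,w_i^{q_i})$ satisfies $w_i^r\geq 0$ and $A_i(w_i^1,\ldots,w_i^{q_i})^T=0$, so it lies in the recession cone of $\bar{\mathcal{V}}_i$; since $\bar{\mathcal{V}}_i$ is nonempty and compact this cone is $\{0\}$, forcing $w_i^r=0$ for every $r\geq 1$, and I may take $\lambda_i=0$ together with an arbitrary $v_i\in\bar{\mathcal{V}}_i$. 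Substituting back shows that the SOS constraint of the relaxation holds with the same $\mu$, completing the identification.

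The main obstacle will be the degenerate case $w_i^0=0$: the natural substitution breaks there, and compactness of $\bar{\mathcal{V}}_i$ must be invoked via the recession-cone argument to conclude that the extraneous weights vanish. Everything else is a bookkeeping translation between the two parameterizations, and the degree bound $\Sigma^2_{k_0}$ is preserved because the polynomial being certified is unchanged under the substitution.
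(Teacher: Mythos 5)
Your proposal is correct and takes essentially the same route as the paper: attainment via reduction to the extreme points of $\bar{\mathcal{V}}_i$ plus Lemma \ref{minattain}, then Theorem \ref{alternative} for the exact SOS relaxation, then the substitution $w_i^0=\lambda_i$, $w_i^r=\lambda_i v_i^{(r)}$ to identify that relaxation with $(D_{k_0}^p)$. The only difference is one of detail: where the paper simply asserts the equivalence of the two constraint parameterizations, you explicitly justify the degenerate case $w_i^0=0$ by the recession-cone argument (compactness of $\bar{\mathcal{V}}_i$ forcing $w_i^r=0$ for $r\geq 1$), which is a worthwhile elaboration of the paper's ``it can be verified'' step rather than a new approach.
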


\begin{proof}
Denote the extreme points of $\bar{\mathcal{V}}_i$ by $v_i^1,\ldots,v_i^{s_i}$, $i=1,\ldots,m$. As $v_i \mapsto g_i(x,v_i)$ is affine, $\max_{v_i \in \bar{\mathcal{V}}_i}g_i(x,v_i)=\max_{1 \leq j \leq s_i}g_i(x,v_i^j)$ for each fixed $x \in \mathbb{R}^n$. So, $\min(P^p)$ can be equivalently rewritten as follows:
%\begin{equation*}
$\min\limits_{x\in\mathbb{R}^n}\left\{ f(x) : g_i(x,v_{i}^j)\leq 0, \forall j=1,\ldots,s_i,\,\forall i=1,\ldots,m\right\}$.
%\end{equation*}
So, the minimum in the primal problem is attained in virtue of Lemma \ref{minattain}.

% Notice that for each $i\in\left\{1,\ldots,m\right\}$ we have $$\left\{x : g_i(x,v_{ij})\leq 0\ \forall v_i\in \mathcal{V}_i\right\} = \left\{ x : g_i(x,v_{ij})\leq 0\ \forall j=1,\ldots,s_i \right\}.$$

%

Now, according to Theorem \ref{alternative}, we have
\begin{eqnarray}
\label{dualpolyA}
\min(P^p) & = & \min\limits_{x\in\mathbb{R}^n} \left\{ f(x) : g_i^{(0)}(x)+\sum_{r=1}^{q_i} v_i^{(r)}g_i^{(r)}(x) \leq 0 \ \forall v_i\in \bar{\mathcal{V}}_i, i=1,\ldots,m\right\} \nonumber \\
          & = & \max\limits_{\substack{\mu\in \mathbb{R}, \lambda_i \ge 0 \\ v_i^{(r)} \ge 0, A_i v_i = b_i}} \left\{\mu\ :\ f + \sum\limits_{i=1}^{m} \lambda_i \left(g_i^{(0)}+\sum_{r=1}^{q_i} v_i^{(r)}g_i^{(r)} \right) - \mu \in \Sigma^2_{k_0} \right\}.
\end{eqnarray}
Let $w_i^{0}:=\lambda_i$ and $w_i^r:=\lambda_i v_i^{(r)}$ for $r=1,\ldots,q_i$. Observe that, % $A_i=(A_{i}^{jr})_{1 \le j \le l_i, 1 \le r \le q_i}$ and $b_i=(b_i^1,\ldots,b_i^{l_i})^T$. Then, we see that
for each $i=1,\ldots,m$,
$$\lambda_i \geq 0, v_i^{(r)} \geq 0\ \forall r=1,\ldots,q_i, A_i v_i = b_i $$
is equivalent to
$ w_i^r \geq 0 \ \forall r=0,1,\ldots,q_i, \sum_{r=1}^{q_i} A_{i}^{jr}w_i^r = w_i^0 b_i^j \ \forall j=1,\ldots,l_i.$
So, the maximization problem in \eqref{dualpolyA} collapses to that one in \eqref{eq:06} with $k=k_0$. Thus,  $\min(P^p)=\max(D^p_{k_0})$.
%\begin{equation}\label{eq:00}
%\max\limits_{{\mu\in \mathbb{R}, w_i^r \ge 0}} \{\mu\ :\ f +  \sum\limits_{i=1}^{m} \sum\limits_{r=0}^{q_i} w_i^r g_i^{(r)} - \mu \in \Sigma^2_{k_0}, \, \sum_{r=1}^{q_i} A_{i}^{jr}w_i^r  =w_i^0b_i^j, j=1,\ldots,l_i.\}.
%% \label{dualpoly2}
%\end{equation}
\end{proof}

The above theorem illustrates that, under the robust strict feasibility condition, an exact SDP relaxation holds for robust SOS-convex polynomial
optimization problem under polytopic data uncertainty. In the special case of robust convex quadratic optimization problem, such an exact SDP relaxation result was given in \cite{BN1,BV}.

\subsection{Restricted ellipsoidal data uncertainty}

Consider the robust SOSCP with a restrictive ellipsoidal uncertainty, that is, $\mathcal{V}_i=\hat{\mathcal{V}}_i$ where $\hat{\mathcal{V}}_i$ is given by
\begin{equation}\label{unC}
% \label{eq:07}
\hat{\mathcal{V}}_i :=\{ v_i \in \mathbb{R}^{q_i} : v_i^{(r)} \geq 0, r=1,\ldots,t_i, \|(v_i^{(1)},\ldots,v_i^{(t_i)})\| \leq 1, \|(v_i^{(t_i+1)},\ldots,v_i^{(q_i)})\| \leq 1\}.
\end{equation}
In the case where all the SOS-convex polynomials are convex quadratic functions, the above problem collapses to the robust quadratic optimization problem under restrictive ellipsoidal uncertainty set which was examined in \cite{goldfab}.
It is worth noting that the restriction of $v_i^{(r)} \ge 0$ is essential. Indeed, as pointed out in \cite{goldfab}, if this nonnegative restriction
is dropped, the corresponding robust quadratic optimization problem becomes NP-hard. It should also be noted that SOS-convexity of $g_i(\cdot, v)$ may not be preserved if $v_i^{(r)}$ is negative for some $r$ because $g_i(\cdot,v)=g_i^{(0)}+\sum_{r=1}^{q_i} v_i^{(r)}g_i^{(r)}$ and $g_i^{(r)}$'s are all SOS-convex polynomials.

We begin this case by providing a numerically tractable characterization for a point $x$ to be feasible for the robust SOSCP under the above restricted ellipsoidal data uncertainty.
\begin{lemma}[{\bf Robust feasibility characterization}]
Let $x \in \mathbb{R}^n$ and let $\hat{\mathcal{V}}_i$ be given in (\ref{unC}). Then $x$ is feasible for the robust SOSCP problem (P), that is,  $g_i^{(0)}(x)+\sum_{r=1}^{q_i} v_i^{(r)}g_i^{(r)}(x) \leq 0$,  $\forall v_i\in \hat{\mathcal{V}}_i$, $i=1,2, \ldots, m$, if and only if,
for each $i=1,\ldots,m$, the following second-order cone programming problem has a non-negative optimal value
\begin{equation*}
\begin{array}{cl}
 \max\limits_{\mu_1,\mu_2,\lambda_i^{(r)}}  &  -a_i^{(0)}-\mu_1-\mu_2 \\
  s.t.  &  \|\big(a_i^{(1)}+\lambda_i^{(1)},\ldots, a_i^{(t_i)}+\lambda_i^{(t_i)}\big)\|\le \mu_1, \\
        &  \|\big(a_i^{(t_i+1)},\ldots, a_i^{(q_i)}\big)\|\le \mu_2, \\
				&  \mu_1, \mu_2 \geq 0, \lambda_i^{(r)}\geq 0, \ \forall r=1,\ldots,t_i,
\end{array}
\end{equation*}
where $a_i^{(r)}=g_i^{(r)}(x)$, $r=0,1,\ldots,q_i$.
\end{lemma}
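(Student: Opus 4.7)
The plan is to fix $i \in \{1,\ldots,m\}$, write $a_i^{(r)} := g_i^{(r)}(x)$, and reduce the $i$-th robust constraint to an explicit scalar inequality, then recognize that inequality as nonnegativity of the stated SOCP's optimal value. Since $v_i \mapsto g_i(x,v_i)$ is affine, the $i$-th robust constraint at $x$ is $a_i^{(0)} + \max_{v_i \in \hat{\mathcal{V}}_i} \sum_{r=1}^{q_i} v_i^{(r)} a_i^{(r)} \le 0$. Because $\hat{\mathcal{V}}_i$ has a product structure, namely the nonnegative unit ball in $\mathbb{R}^{t_i}$ crossed with the unit ball in $\mathbb{R}^{q_i-t_i}$, the inner maximum decouples into
\[
M_1 := \max\Bigl\{\textstyle\sum_{r=1}^{t_i} u^{(r)} a_i^{(r)} : u \ge 0,\ \|u\| \le 1\Bigr\}, \qquad M_2 := \max\Bigl\{\textstyle\sum_{r=t_i+1}^{q_i} w^{(r-t_i)} a_i^{(r)} : \|w\| \le 1\Bigr\}.
\]

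Next I will evaluate $M_1$ and $M_2$ in closed form. By Cauchy--Schwarz, $M_2 = \|(a_i^{(t_i+1)},\ldots,a_i^{(q_i)})\|$. For $M_1$, set $a_i^{(r),+} := \max\{a_i^{(r)},0\}$: the feasible $u^{(r)} = a_i^{(r),+}/\|(a_i^{(s),+})_s\|$ (or $u=0$ if every $a_i^{(r)} \le 0$) attains the value $\|(a_i^{(1),+},\ldots,a_i^{(t_i),+})\|$, and the bound $\sum u^{(r)} a_i^{(r)} \le \sum u^{(r)} a_i^{(r),+} \le \|u\|\,\|(a_i^{(r),+})\|$, valid since $u \ge 0$, shows that this value is also an upper bound. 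Hence the $i$-th robust constraint is equivalent to the explicit scalar inequality
\[
a_i^{(0)} + \|(a_i^{(1),+},\ldots,a_i^{(t_i),+})\| + \|(a_i^{(t_i+1)},\ldots,a_i^{(q_i)})\| \le 0.
\]

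Finally, I will match this inequality against the SOCP. Since $-\mu_1 - \mu_2$ is to be maximized and $\mu_1,\mu_2$ appear only on the large side of second-order cone constraints (and their nonnegativity is automatic), at optimum one sets $\mu_1 = \|(a_i^{(1)}+\lambda_i^{(1)},\ldots,a_i^{(t_i)}+\lambda_i^{(t_i)})\|$ and $\mu_2 = \|(a_i^{(t_i+1)},\ldots,a_i^{(q_i)})\| = M_2$. The remaining maximization over $\lambda_i^{(r)} \ge 0$ then reduces to minimizing $\sum_{r=1}^{t_i} (a_i^{(r)}+\lambda_i^{(r)})^2$, whose componentwise optimum $\lambda_i^{(r)} = \max\{-a_i^{(r)},0\}$ yields the value $\sum_{r=1}^{t_i} (a_i^{(r),+})^2 = M_1^2$. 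Therefore the SOCP's optimal value equals $-a_i^{(0)} - M_1 - M_2$, which is nonnegative precisely when the $i$-th robust constraint holds at $x$. The only real subtlety is the elementary scalar identity $\min_{\lambda \ge 0}(a+\lambda)^2 = (\max\{a,0\})^2$; everything else is direct bookkeeping, and repeating the argument over $i = 1,\ldots,m$ completes the proof.
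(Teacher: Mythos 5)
Your proposal is correct, and it takes a genuinely different route from the paper. The paper writes the $i$-th robust constraint as nonnegativity of $\inf_{v_i}\{-a_i^{(0)}-\sum_{r=1}^{q_i} v_i^{(r)}a_i^{(r)}\}$ over $\hat{\mathcal{V}}_i$ and then invokes the standard Lagrangian duality theorem: introducing multipliers $\mu_1,\mu_2\ge 0$ for the two norm constraints and $\lambda_i^{(r)}\ge 0$ for the sign constraints, the dual of that minimization is exactly the stated SOCP, so the equivalence follows from strong duality (which needs a Slater-type condition, satisfied here, and dual attainment). You instead evaluate \emph{both} sides in closed form: the support-type maximum over $\hat{\mathcal{V}}_i$ decouples over the product structure into $M_1=\|\bigl(a_i^{(1),+},\ldots,a_i^{(t_i),+}\bigr)\|$ (positive parts, with the correct two-sided argument) plus $M_2=\|\bigl(a_i^{(t_i+1)},\ldots,a_i^{(q_i)}\bigr)\|$ (Cauchy--Schwarz), while the SOCP value collapses, via the elementary identity $\min_{\lambda\ge 0}(a+\lambda)^2=(\max\{a,0\})^2$, to $-a_i^{(0)}-M_1-M_2$; the two computations match term by term. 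Your argument buys elementarity and explicitness: no duality theorem or constraint qualification is needed, the SOCP optimum is exhibited (hence attained), and you obtain the transparent feasibility test $a_i^{(0)}+M_1+M_2\le 0$ as a by-product. The paper's derivation buys generality: it explains the SOCP as the Lagrangian dual of the feasibility subproblem, a recipe that would still apply to convex uncertainty sets for which closed-form maxima like $M_1$ and $M_2$ are not available.
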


\begin{proof}
Fix $i \in \{1,\ldots,m\}$. Note that $x$ is feasible for robust SOSCP under the restricted ellipsoidal uncertainty is equivalent to
% $g_i^{(0)}(x)+\sum_{r=1}^{q_i} v_i^{(r)} g_i^{(r)}(x) \leq 0$, $\forall v_i\in \hat{\mathcal{V}}_i$ is equivalent to
$$ \left. \begin{array}{c}
\|(v_i^{(1)},\ldots,v_i^{(t_i)})\| \leq 1, \, v_i^{(r)} \geq 0, \, r=1,\ldots,t_i \\
\|(v_i^{(t_i+1)},\ldots,v_i^{(q_i)})\| \leq 1
\end{array} \right\}
\Rightarrow -g_i^{(0)}(x)-\sum_{r=1}^{q_i} v_i^{(r)} g_i^{(r)}(x) \geq 0. $$
Using the standard Lagrangian duality theorem, this can be equivalently rewritten as \begin{equation*}
0  \leq  \inf\limits_{v_i \in \mathbb{R}^{q_i}} \left\{ -g_i^{(0)}(x)-\sum_{r=1}^{q_i} v_i^{(r)} g_i^{(r)}(x) :
\begin{array}{l}
\|(v_i^{(1)},\ldots,v_i^{(t_i)})\| \leq 1, \, v_i^{(r)} \geq 0, \, r=1,\ldots,t_i \\
\|(v_i^{(t_i+1)},\ldots,v_i^{(q_i)})\| \le 1
\end{array}
\right\}
\end{equation*}
\begin{equation*}
= \max\limits_{\substack{\mu_1,\mu_2 \ge 0 \\ \lambda_i \in\mathbb{R}^{t_i}_+ }}  \  \inf\limits_{v_i \in \mathbb{R}^{q_i}}   \   \left\{
\begin{array}{c}
-g_i^{(0)}(x)-\sum\limits_{r=1}^{q_i} v_i^{(r)} g_i^{(r)}(x) + \mu_1(\|(v_i^{(1)},\ldots,v_i^{(t_i)})\| -1) \\
 +\mu_2(\|(v_i^{(t_i+1)},\ldots,v_i^{(q_i)})\|-1)-\sum\limits_{r=1}^{t_i}\lambda_i^{(r)}v_i^{(r)} \
\end{array}
\right\}
\end{equation*}
\begin{equation*}
= \max\limits_{\substack{\mu_1,\mu_2 \ge 0 \\ \lambda_i \in\mathbb{R}^{t_i}_+ }}  \ \left\{ -g_i^{(0)}(x)-\mu_1-\mu_2  :
\begin{array}{l}
\|\big(g_i^{(1)}(x)+\lambda_i^{(1)},\ldots, g_i^{(t_i)}(x)+\lambda_i^{(t_i)}\big)\|\le \mu_1, \\
 \|\big(g_i^{(t_i+1)}(x),\ldots, g_i^{(q_i)}(x)\big)\|\le \mu_2.
\end{array}
\right\}
\end{equation*}
Hence, the equivalence follows.
\end{proof}

For the robust SOSCP $(P)$ with restricted ellipsoidal uncertainty sets $\hat{\mathcal{V}}_i$, named $(P^e)$, and each $k \in \mathbb{N}$, the corresponding relaxation problem $(D_k^e)$ can be stated as

\begin{equation}
\label{eq:08}
\begin{array}{ccl}
(D_k^e)  &  \max\limits_{\mu, w_i^r}  &  \mu  \\
         &   s.t.   &   f + \sum\limits_{i=1}^{m} \sum\limits_{r=0}^{q_i} w_i^r g_i^{(r)} - \mu \in \Sigma^2_{k}  \\
			 &          &   \|(w_i^{1},\ldots,w_i^{t_i})\| \leq w_i^{0}, \ \forall i=1,\ldots,m, \\
  			 &          &   \|(w_i^{t_i+1},\ldots,w_i^{q_i})\| \leq w_i^{0}, \ \forall i=1,\ldots,m, \\
				 &          &   w_i^r \geq 0, \ \forall r=0,1,\ldots,t_i, \ \forall i=1,\ldots,m. \\
				 &          &   \mu\in\mathbb{R}, w_i^r \in \mathbb{R}, \ \forall r=t_i+1\ldots,q_i,\ \forall i=1,\ldots,m.
\end{array}
\end{equation}
Let $k_0$ be the smallest even number such that $k_0\geq \max_{0 \leq r \leq q_i, 1 \leq i \leq m}\{ \deg f, \deg g_i^{(r)}\}$. As in the polytopic case, the relaxation problem $(D_k^e)$ can be equivalently rewritten as a semidefinite programming problem with an additional second-order cone constraint. %This is illustrated in the appendix.

%Suppose that for any $i\in\left\{1,\ldots,m\right\}$, $\mathcal{V}_i$ is an ellipsoidal set and $g_i(x,\cdot)$ is additive and homogeneous of degree $1$ for all $x\in \mathbb{R}^n$. We will write $\mathcal{V}_i = \left\{ v_i : v_i = v_i^0 + \sum_{j=1}^{s_i} \zeta_i^j v_i^j,\,\zeta_i\in Z\right\}$ with $Z=\left\{\zeta\in\mathbb{R}^{s_i} : \left\|\zeta\right\|_2 \leq 1\right\}$.

\begin{theorem}[{\bf Exact SDP-relaxation under restricted ellipsoidal  uncertainty}]
\label{cor43}
Consider the uncertain SOS-convex polynomial optimization problem under ellipsoidal data uncertainty $(P^e)$ and its relaxation problem $(D_k^e)$.
Suppose that
 $\{x\in\mathbb{R}^n : g_i^{(0)}(x)+\sum_{r=1}^{q_i} v_i^{(r)} g_i^{(r)}(x)<0 \ \forall v_i\in \hat{\mathcal{V}}_i, i=1,\ldots,m\} \neq \emptyset.$
Then, $\inf(P^e)=\max(D_{k_0}^e)$. %=\max(SDP^e).$
\end{theorem}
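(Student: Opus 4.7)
The plan is to reduce to Theorem~\ref{alternative} (exact SOS relaxation) and then rewrite the resulting problem as the SOC/SDP program $(D_{k_0}^e)$ via a one-to-one change of variables, mirroring the polytopic argument in Theorem~\ref{cor:1}.

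First, I would verify the hypotheses of Theorem~\ref{alternative}. The set $\hat{\mathcal{V}}_i$ is convex (an intersection of a nonnegative orthant and two Euclidean balls in complementary coordinate blocks) and compact. The map $v_i\mapsto g_i(x,v_i)=g_i^{(0)}(x)+\sum_{r=1}^{q_i}v_i^{(r)}g_i^{(r)}(x)$ is affine in $v_i$, hence concave. For each fixed $v_i\in\hat{\mathcal{V}}_i$, the polynomial $g_i(\cdot,v_i)$ is SOS-convex: the terms with $r\le t_i$ contribute a \emph{nonnegative} combination of SOS-convex polynomials (since $v_i^{(r)}\ge 0$ there), and the terms with $r>t_i$ are affine (hence trivially SOS-convex). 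Combined with the stated robust Slater condition, Theorem~\ref{alternative} gives
\[
\inf(P^e)=\max_{\lambda_i\ge 0,\;v_i\in\hat{\mathcal{V}}_i}\Bigl\{\mu:\;f+\sum_{i=1}^{m}\lambda_i\bigl(g_i^{(0)}+\sum_{r=1}^{q_i}v_i^{(r)}g_i^{(r)}\bigr)-\mu\in\Sigma^2_{k_0}\Bigr\}.
\]

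Next I would perform the lifting $w_i^0:=\lambda_i$ and $w_i^r:=\lambda_i v_i^{(r)}$ for $r=1,\ldots,q_i$. The Lagrangian term becomes $\lambda_i g_i(\cdot,v_i)=w_i^0 g_i^{(0)}+\sum_{r=1}^{q_i}w_i^r g_i^{(r)}$, which is exactly the polynomial appearing in $(D_k^e)$. I would check that, under this substitution, the admissibility conditions transform correctly: $\lambda_i\ge 0$ becomes $w_i^0\ge 0$; $v_i^{(r)}\ge 0$ for $r\le t_i$ becomes $w_i^r\ge 0$; and the two norm bounds $\|(v_i^{(1)},\ldots,v_i^{(t_i)})\|\le 1$ and $\|(v_i^{(t_i+1)},\ldots,v_i^{(q_i)})\|\le 1$ scale to the conic inequalities $\|(w_i^1,\ldots,w_i^{t_i})\|\le w_i^0$ and $\|(w_i^{t_i+1},\ldots,w_i^{q_i})\|\le w_i^0$ appearing in \eqref{eq:08}.

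The only subtle point, and what I expect to be the main obstacle, is the converse direction of this reparametrization when $w_i^0=0$: I cannot recover $v_i^{(r)}=w_i^r/w_i^0$. However, the second-order cone constraints in $(D_k^e)$ force $w_i^r=0$ for all $r=1,\ldots,q_i$ in that case, so the entire block $\sum_{r=0}^{q_i}w_i^r g_i^{(r)}$ vanishes and one may simply choose $\lambda_i=0$ and any $v_i\in\hat{\mathcal{V}}_i$ (say $v_i=0$) to recover a feasible point for the upper problem with the same polynomial identity and the same $\mu$. When $w_i^0>0$, the choice $\lambda_i=w_i^0$, $v_i^{(r)}=w_i^r/w_i^0$ clearly lies in $\{\lambda_i\ge 0\}\times\hat{\mathcal{V}}_i$. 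Having established this bijection on feasible points (up to the harmless ambiguity at $w_i^0=0$) and noting both problems carry the same objective $\mu$ and the same SOS constraint $f+\sum w_i^r g_i^{(r)}-\mu\in\Sigma^2_{k_0}$, I conclude $\inf(P^e)=\max(D_{k_0}^e)$, as required.
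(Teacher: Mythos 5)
Your proposal is correct and follows essentially the same route as the paper's own proof: invoke Theorem~\ref{alternative} under the robust Slater condition, then pass to the variables $w_i^0=\lambda_i$, $w_i^r=\lambda_i v_i^{(r)}$ to identify the resulting problem with $(D_{k_0}^e)$. Your explicit verification of the hypotheses of Theorem~\ref{alternative} (using $v_i^{(r)}\ge 0$ for $r\le t_i$ to preserve SOS-convexity) and your treatment of the degenerate case $w_i^0=0$ simply fill in details that the paper compresses into ``it can be verified that.''
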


\begin{proof}
Using Theorem \ref{alternative}, we obtain that
\begin{eqnarray}
\label{dualpoly1}
\inf(P^e) & = & \inf\limits_{x\in\mathbb{R}^n}\left\{ f(x) : g_i^{(0)}(x)+\sum_{r=1}^{q_i} v_i^{(r)}g_i^{(r)}(x) \leq 0, \  \forall v_i\in \hat{\mathcal{V}}_i, i=1,\ldots,m\right\} \nonumber \\
          & = & \max\limits_{\substack{ \mu\in\mathbb{R}, \lambda_i\geq 0 \\ v_i\in \hat{\mathcal{V}}_i}} \left\{\mu : f + \sum\limits_{i=1}^{m}\lambda_i g_i^{(0)}  + \sum\limits_{i=1}^{m}\sum_{r=1}^{q_i} \lambda_i v_i^{(r)} g_i^{(r)}  -\mu \in \Sigma^2_{k_0} \right\}.
\end{eqnarray}
To see the conclusion, define $w_i^{0}:=\lambda_i$ and $w_i^{r}:=\lambda_i v_i^{(r)}$, $r=1,\ldots,q_i$. It can be verified that, for each $i=1,\ldots,m$,
$\lambda_i \geq 0,v_i\in \hat{\mathcal{V}}_i $
is equivalent to
\begin{eqnarray*}
% \begin{array}{c}
\|(w_i^{1},\ldots,w_i^{t_i})\| & \leq & w_i^{0}, \quad w_i^r \geq 0, \ \forall r=0,1,\ldots,t_i, \\
\|(w_i^{t_i+1},\ldots,w_i^{q_i})\| & \leq & w_i^{0}, \quad w_i^r \in \mathbb{R}, \ \forall r=t_i+1\ldots,q_i.
% \end{array}
\end{eqnarray*}
So, the maximization problem in \eqref{dualpoly1} collapses to that one in \eqref{eq:08} with $k=k_0$. Thus, we get $\inf(P^e)=\max(D^e_{k_0})$.
\end{proof}
%The above theorem shows that an exact SDP relaxation holds for robust SOS-convex polynomial optimization problem under restricted ellipsoidal data uncertainty.

In the following example, we show that an exact SDP relaxation may fail for a robust convex (but not SOS-convex) polynomial optimization problem with linear constraints under restricted ellipsoidal data uncertainty.

\begin{example}{\bf (Failure of exact SDP relaxation for convex polynomial optimization)}\label{ex:3.1}
Let $f$ be a convex homogeneous polynomial with degree at least $2$ in $\mathbb{R}^n$  which is not a sum-of-squares polynomial (see \cite{Laurent1,convexA}, for the existence  of such polynomials).  Consider the following robust convex polynomial optimization problem under ellipsoidal data uncertainty:
\begin{eqnarray*}
& \min_{x \in \mathbb{R}^n} & f(x) \\
& \mbox{ s.t. } & v^Tx -1 \le 0, \, \forall \, v\in \mathcal{V},
%F_0+\sum_{i=1}^n x_iF_i \succeq 0,
\end{eqnarray*}
where $\mathcal{V}=\{u\in \mathbb{R}^n : \|u\| \le 1\}$ is the uncertainty set and $g(x,v)=v^Tx -1$. It is easy to see that
the strict feasibility condition is satisfied.
%Note that the constraint can be equivalently rewritten as $\left(\begin{array}{cc}
%I_n & x \\
%x & 1
%\end{array}
%\right) \succeq 0$ which is, in turn, equivalent to $\|x\|^2 \le 1$.

We now show that our SDP relaxation is not exact. To see this, as $f$ is a convex homogeneous polynomial with degree at least $2$ (which is necessarily nonnegative), we first note that $\inf_{x\in \mathbb{R}^n}\{f(x): g(x,v) \le 0, \, \forall \, v\in \mathcal{V}\}=0$. The claim will follow if we show that, for any $(w_1^0,w_1^1,\ldots,w_1^n) \in \mathbb{R}^{n+1}$ with $\|(w_1^1,\ldots,w_1^n) \| \le w_1^0$, $f(x)+(-1)w_1^0+\sum_{i=1}^n w_1^i x_i-0   \notin \Sigma_d^2$. Otherwise, there exists a sum of squares polynomial $\sigma$ with degree at most $d$ such that
\begin{equation}\label{eq:representation}
f(x)+(-1)w_1^0+ \sum_{i=1}^n w_1^i x_i=\sigma(x), \  \mbox{ for all } x \in \mathbb{R}^n.
\end{equation}
%Note that $f(0)=0$ and $\mathrm{Tr\ }[A]+r \ge 0$ as $Z \succeq 0$. Letting $x=0$ in (\ref{eq:representation}), we see that
%\[
%0 \ge -(\mathrm{Tr\ }[A]+r)=f(0)-(\mathrm{Tr\ }[A]+r)=\sigma(0) \ge 0.
%\]
%This forces that $\mathrm{Tr\ }[A]+r=0$, and hence $f(x)-2a^Tx=\sigma(x)$, for all $x \in \mathbb{R}^n$.
for some $(w_1^0,w_1^1,\ldots,w_1^n) \in \mathbb{R}^{n+1}$ with $\|(w_1^1,\ldots,w_1^n) \| \le w_1^0$.
Note that $\sigma$ is a sum-of-squares (and so, is nonnegative) and $w_1^0 \ge 0$. So, $f(x) \ge h(x):=-\sum_{i=1}^n w_1^i x_i$.
 As $f$ is a convex homogeneous polynomial with degree $m$ and $m \ge 2$, $h \equiv 0$. (Indeed, if there exists $\hat{x}$
  such that $h(\hat{x}) \neq 0$, then by replacing $\hat{x}$ with $-\hat{x}$, we can assume that $h(\hat{x}) >0$. Now, we have
   $t^mf(\hat{x})=f(t \hat{x}) \ge 2h(t\hat{x})=2th (\hat{x})$,  for all $t > 0$. This shows us that $\frac{f(\hat{x})}{h(\hat{x})} \ge \frac{1}{t^{m-1}}$, for all $t>0$.
    This is a contradiction as $\frac{1}{t^{m-1}}\rightarrow \infty$ as $t \rightarrow 0$.) Hence, $f=\sigma+w_1^0$ and so $f$ is a sum-of-squares polynomial. This contradicts our construction of $f$. Therefore, our relaxation is not exact.
\end{example}

\begin{corollary}[{\bf Robust SOSCP with a sum of quadratic and separable functions}]
\label{cor431}
Consider the uncertain convex polynomial optimization problem under ellipsoidal data uncertainty $(P^e)$ and its relaxation problem $(D_k^e)$,
where each $g_i^{(r)}$ is the sum of a separable convex polynomial and a convex quadratic function, i.e., $g_i^{(r)}(x)=\sum_{l=1}^nh_{il}^{(r)}(x_l)+\frac{1}{2}x^TB_i^{(r)}x + \big(b_i^{(r)}\big)^T x + \beta_i^{(r)}$ for some convex univariate polynomial $h_{il}^{(r)}$, $B_i^{(r)} \succeq 0$, $b_i^{(r)} \in \mathbb{R}^n$ and $\beta_i^{(r)} \in \mathbb{R}$. Suppose that
 $\{x\in\mathbb{R}^n : g_i^{(0)}(x)+\sum_{r=1}^{q_i} v_i^{(r)} g_i^{(r)}(x)<0 \ \forall v_i\in \hat{\mathcal{V}}_i, i=1,\ldots,m\} \neq \emptyset.$
Then,  we have $\inf(P^e)=\max(D_{k_0}^e)$. % =\max(SDP^e).$
\end{corollary}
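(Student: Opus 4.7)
The plan is to deduce this corollary as a direct specialization of Theorem \ref{cor43}. That theorem already gives the exact SDP-relaxation $\inf(P^e)=\max(D_{k_0}^e)$ whenever each $g_i^{(r)}$ is SOS-convex and the robust Slater condition holds. Since the Slater condition is assumed in the statement, the only thing I need to verify is that the specific functional form $g_i^{(r)}(x)=\sum_{l=1}^nh_{il}^{(r)}(x_l)+\tfrac{1}{2}x^TB_i^{(r)}x+(b_i^{(r)})^Tx+\beta_i^{(r)}$, with convex univariate $h_{il}^{(r)}$ and $B_i^{(r)}\succeq 0$, is SOS-convex.

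First I would recall two facts already mentioned in the paper: every convex quadratic function is SOS-convex, and every convex separable polynomial is SOS-convex. The quadratic piece $\tfrac{1}{2}x^TB_i^{(r)}x+(b_i^{(r)})^Tx+\beta_i^{(r)}$ is covered by the first fact since $B_i^{(r)}\succeq 0$. The separable piece $\sum_{l=1}^n h_{il}^{(r)}(x_l)$ is a sum of convex univariate polynomials, hence a convex separable polynomial, and is covered by the second fact.

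Next I would verify the (completely routine) observation that the sum of two SOS-convex polynomials is again SOS-convex. Indeed, if $\nabla^2 p_1=H_1H_1^T$ and $\nabla^2 p_2=H_2H_2^T$ for matrix polynomials $H_1,H_2$, then $\nabla^2(p_1+p_2)=H_1H_1^T+H_2H_2^T=[H_1\ H_2][H_1\ H_2]^T$, so $\nabla^2(p_1+p_2)$ is a SOS matrix polynomial. Consequently each $g_i^{(r)}$, being the sum of its separable and quadratic parts, is SOS-convex.

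With this in hand, the hypotheses of Theorem \ref{cor43} are fully met: $f$ is SOS-convex by assumption of $(P^e)$, every $g_i^{(r)}$ is SOS-convex, the uncertainty sets $\hat{\mathcal{V}}_i$ are the restricted ellipsoids, and the robust strict feasibility condition is assumed. Invoking Theorem \ref{cor43} directly yields $\inf(P^e)=\max(D_{k_0}^e)$. There is no real obstacle here; the only point that requires any argument at all is the (elementary) Hessian-stacking fact that SOS-convexity is preserved under finite sums, and that is essentially a one-line verification.
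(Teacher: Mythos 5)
Your proposal is correct and follows exactly the paper's own route: the paper proves this corollary in one line by invoking Theorem \ref{cor43} together with the fact that the sum of a separable convex polynomial and a convex quadratic function is SOS-convex. The only difference is that you spell out the (correct) Hessian-stacking verification that SOS-convexity is preserved under sums, a detail the paper leaves implicit.
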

\begin{proof}
The conclusion follows from the preceding theorem by noting that the sum of a separable convex polynomial and a convex quadratic function is a SOS-convex polynomial.
\end{proof}

\begin{corollary}[{\bf Robust convex quadratic problem}]
\label{cor:CQP}
For problem $(P^e)$ and its relaxation problem $(D_2^e)$, let
$f(x)= x^TAx + 2a^T x +\alpha$, $g_i^{(r)}(x) =x^TB_i^{(r)}x + 2\big(b_i^{(r)}\big)^T x + \beta_i^{(r)}$, $r=0,1,\ldots,t_i$, and  $g_i^{(r)}(x) =\big(b_i^{(r)}\big)^T x + \beta_i^{(r)}$, $r=t_i+1,\ldots,q_i$,
where $A,B_i^{(r)} \succeq 0$, $a,b_i^{(r)} \in \mathbb{R}^n$ and $\alpha,\beta_i^{(r)} \in \mathbb{R}$. Suppose that $\{x\in\mathbb{R}^n : g_i^{(0)}(x)+\sum_{r=1}^{q_i} v_i^{(r)} \ g_i^{(r)}(x)<0\ \forall v_i\in \hat{\mathcal{V}}_i, i=1,\ldots,m\} \neq \emptyset.$  Then,
$\inf(P^e)=\max(D_{2}^e)$ and $\max(D_2^e)$ can be written as the following semi-definite programming problem
 %\min\limits_{x\in\mathbb{R}^n}\left\{ f(x) : x\in F\right\} = \max\limits_{\substack{ \mu\in\mathbb{R} \\ \alpha_i^j\geq 0}} \left\{\mu : f(\cdot) +  \sum\limits_{i=1}^{m}\sum\limits_{j=1}^{s_i}\alpha_i^j g_i(\cdot,v_i^j) -\mu \in \sum^2_{k_0} \right\},
\begin{eqnarray}\label{eq:ut}
% \begin{array}{cl}
& \max\limits_{\mu, w_i^r}   &  \mu \nonumber \\
&  s.t.   &  \begin{pmatrix}
A+\sum\limits_{i=1}^{m}\sum\limits_{r=0}^{t_i} w_i^{r}B_i^{(r)}       &   a+\sum\limits_{i=1}^{m}\sum\limits_{r=0}^{q_i} w_i^{r}b_i^{(r)} \\
(a+\sum\limits_{i=1}^{m}\sum\limits_{r=0}^{q_i} w_i^{r}b_i^{(r)})^T   &   \alpha+\sum\limits_{i=1}^{m}\sum\limits_{r=0}^{q_i} w_i^{r}\beta_i^{(r)}-\mu
\end{pmatrix} \succeq 0 \\
%  &  \|(w_i^{(1)},\ldots,w_i^{(t_i)})\| \le w_i^{(0)}, w_i^r \ge 0, r=0,1,\ldots,t_i,  \\
%  & \|(w_i^{(t_i+1)},\ldots,w_i^{(q_i)})\| \le w_i^{(0)}.
  &   &   \|(w_i^{1},\ldots,w_i^{t_i})\| \leq w_i^{0}, \ \forall i=1,\ldots,m, \nonumber \\
  &   &   \|(w_i^{t_i+1},\ldots,w_i^{q_i})\| \leq w_i^{0}, \ \forall i=1,\ldots,m, \nonumber \\
  &   &   w_i^r \geq 0, \ \forall r=0,1,\ldots,t_i, \ \forall i=1,\ldots,m, \nonumber \\
  &   &   \mu\in\mathbb{R}, w_i^r \in \mathbb{R}, \ \forall r=t_i+1\ldots,q_i,\ \forall i=1,\ldots,m. \nonumber
%\end{array}
\end{eqnarray}
\end{corollary}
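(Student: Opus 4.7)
The plan is to reduce this to Theorem \ref{cor43} and then rewrite the cone $\Sigma_2^2$ as a single linear matrix inequality using the standard Gram--matrix representation of quadratic sum-of-squares polynomials.

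First I would verify that the hypotheses of Theorem \ref{cor43} are met. Since $A\succeq 0$ and each $B_i^{(r)}\succeq 0$, the functions $f$ and $g_i^{(r)}$ (for $r=0,\ldots,t_i$) are convex quadratics; for $r=t_i+1,\ldots,q_i$ they are affine. Both convex quadratics and affine functions are SOS-convex (their Hessians are constant PSD matrices, trivially SOS matrix polynomials), so the data of $(P^e)$ falls inside the setting of Theorem \ref{cor43}. The robust Slater condition is assumed. Moreover, $\max\{\deg f,\deg g_i^{(r)}\}\le 2$, so the smallest even integer $k_0$ dominating this maximum is exactly $k_0=2$. Theorem \ref{cor43} therefore yields $\inf(P^e)=\max(D_2^e)$.

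It remains to recast $(D_2^e)$ in the form displayed in \eqref{eq:ut}. The polynomial appearing in the SOS-constraint of $(D_2^e)$ is
\[
p_{w,\mu}(x):=f(x)+\sum_{i=1}^m\sum_{r=0}^{q_i}w_i^r g_i^{(r)}(x)-\mu,
\]
which, after collecting terms, has the form $x^TMx+2c^Tx+\gamma$ with
\[
M=A+\sum_{i=1}^m\sum_{r=0}^{t_i}w_i^r B_i^{(r)},\ \ c=a+\sum_{i=1}^m\sum_{r=0}^{q_i}w_i^r b_i^{(r)},\ \ \gamma=\alpha+\sum_{i=1}^m\sum_{r=0}^{q_i}w_i^r\beta_i^{(r)}-\mu,
\]
where in the expression for $M$ I have used that $B_i^{(r)}=0$ when $r>t_i$ (those $g_i^{(r)}$ are affine). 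I would then invoke the well-known equivalence for quadratic polynomials: $p_{w,\mu}\in\Sigma_2^2$ iff the symmetric $(n+1)\times(n+1)$ Gram matrix
\[
\begin{pmatrix} M & c\\ c^T & \gamma\end{pmatrix}\succeq 0.
\]
The ``$\Leftarrow$'' direction is the standard factorisation $\binom{x}{1}^T\binom{M\ c}{c^T\ \gamma}\binom{x}{1}=p_{w,\mu}(x)$ and a Cholesky-type decomposition. The ``$\Rightarrow$'' direction follows because a quadratic SOS polynomial is nonnegative, and any nonnegative quadratic admits such a PSD Gram representation (Hilbert's theorem in the trivial degree-2 case). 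Note that in our situation $M\succeq 0$ automatically because $w_i^r\ge 0$ for $r=0,\ldots,t_i$ and $A,B_i^{(r)}\succeq 0$.

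Substituting this LMI in place of the SOS-membership constraint in \eqref{eq:08} and keeping the second-order cone and sign constraints on the $w_i^r$ unchanged, the resulting programme is exactly \eqref{eq:ut}. There is no real obstacle here; the only point to handle cleanly is the passage from $\Sigma_2^2$-membership to the Gram LMI, which I would state as a short lemma (or cite as a standard fact about quadratic SOS polynomials) to keep the proof concise.
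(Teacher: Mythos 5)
Your proposal is correct and follows essentially the same route as the paper's proof: invoke Theorem \ref{cor43} with $k_0=2$, then replace the constraint $f+\sum_{i,r}w_i^r g_i^{(r)}-\mu\in\Sigma^2_2$ by the equivalent linear matrix inequality via the Gram-matrix characterization of quadratic sums of squares. The paper states this equivalence in one line without proof, whereas you spell out both directions (uniqueness of the Gram matrix plus the factorization argument), which is a fuller but not different argument.
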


\begin{proof}
As $f$ and each $g_i^{(r)}$, $r=1,\ldots,q_i$, are convex quadratic functions,  the conclusion follows by applying Theorem \ref{cor43} and noting just that
$f + \sum\limits_{i=1}^{m}\sum\limits_{r=0}^{q_i} w_i^{r} g_i^{(r)} -\mu  \in \Sigma^2_{2}\,$ is, in this particular case, equivalent to
$ \begin{pmatrix}
A+\sum\limits_{i=1}^{m}\sum\limits_{r=0}^{t_i} w_i^{r}B_i^{(r)}       &   a+\sum\limits_{i=1}^{m}\sum\limits_{r=0}^{q_i} w_i^{r}b_i^{(r)} \\
(a+\sum\limits_{i=1}^{m}\sum\limits_{r=0}^{q_i} w_i^{r}b_i^{(r)})^T   &   \alpha+\sum\limits_{i=1}^{m}\sum\limits_{r=0}^{q_i} w_i^{r}\beta_i^{(r)}-\mu
\end{pmatrix} \succeq 0. $
\end{proof}

We now present a numerical example verifying the exact SDP relaxation for a robust SOS-convex polynomial optimization problem where the objective function is neither a quadratic function  nor a separable function.
\begin{example}{\bf (Exact SDP relaxation for a robust non-quadratic SOS-convex problem)} Consider the following robust SOSCP
\begin{eqnarray*}
(P_5)  &  \min  &  x_1^4+2x_1^2-2x_1x_2+x_2^2 \\
&\mbox{ s.t. }  & v_1x_1+v_2x_2 \le 1, \ \forall \ \|(v_1,v_2)\| \le 1.
\end{eqnarray*}
It is easy to verify that global solution of $(P_5)$ is $(0,0)$ with optimal value zero, and robust Slater condition is satisfied. The corresponding $4th$-order relaxation problem is given by 
\begin{equation*}
\max\limits_{\substack{\mu \in \mathbb{R}, \lambda \geq 0 \\ \|(v_1,v_2)\| \le 1}} \{ \mu : x_1^4+2x_1^2-2x_1x_2+x_2^2+\lambda (v_1x_1+v_2x_2-1)-\mu \in \Sigma_4^2\}.
\end{equation*}
It can be equivalently reformulated as the following semi-definite programming problem:  %$($see $(SDP^e)$ in the appendix$)$ is
\begin{eqnarray*}
 & \max\limits_{\mu, w, W} & \mu \\
 &  s.t.  &   W_{11}=-w^0-\mu, 2W_{12}=w^1, 2W_{13}=w^2, 2W_{23}+2W_{14}=-2,  \\
 &   &   2W_{16}+W_{33}=1, 2W_{15}+W_{22}=2, W_{55}=1, \\
 &   &   W_{ij}=0  \  \,\forall (i,j) \notin \{(2,2),(2,3),(3,2),(3,3),(5,5)\} \cup \{\cup_{j=1}^6 (1,j)\} \cup \{\cup_{j=1}^6 (j,1)\}, \\
 &   &  \|(w^1,w^2)\| \leq w^0,  \mu\in\mathbb{R}, w=(w^0,w^1,w^2)\in\mathbb{R}^3, W=(W_{ij})\in S^6_+.
\end{eqnarray*}
Let $\mu^*$ be the optimal value of the above SDP problem associated to a maximizer $(\mu^*,\hat{w},\hat{W})$. Since $\hat{W} \succeq 0$, then $\hat{W}_{11} \geq 0$ which implies $-\hat{w}^0 - \mu^* \geq 0$, and so, $\mu^* \leq -\hat{w}^0 \leq 0 $. On the other hand, define $\overline{W}\in S^6_+$ by $\overline{W}_{33} = \overline{W}_{55} = 1$, $\overline{W}_{22}=2$ and $\overline{W}_{23}=\overline{W}_{32}=-1$ and $\overline{W}_{ij}=0$ otherwise. Let $\overline{w}^0 = \overline{w}^1 = \overline{w}^2 = 0$ and $\overline{\mu}=0$. It is not hard to verify that $\overline{W} \succeq 0$ and $(\overline{\mu},\overline{w}, \overline{W})$ is a feasible point for the above SDP problem. So, $\mu^* \geq 0$. Thus, $\mu^*=0$ which shows that the SDP relaxation is exact.
\end{example}

%\subsubsection*{}

We note that, in the special case of quadratically constrained optimization problem with linear objective function under restrictive ellipsoidal data uncertainty,  the linear matrix inequality
constraint in (\ref{eq:ut}) reduces to \[
\left(\begin{array}{cc}
\sum\limits_{i=1}^{m}\sum\limits_{r=0}^{t_i} w_i^{r}(L_i^{(r)})^TL_i^{(r)}       &   a+\sum\limits_{i=1}^{m}\sum\limits_{r=0}^{q_i} w_i^{r}b_i^{(r)} \\
(a+\sum\limits_{i=1}^{m}\sum\limits_{r=0}^{q_i} w_i^{r}b_i^{(r)})^T   &   \alpha+\sum\limits_{i=1}^{m}\sum\limits_{r=0}^{q_i}w_i^{r}\beta_i^{(r)}-\mu
\end{array}\right) \succeq 0,
\]
where $L_i^{(r)} \in \mathbb{R}^{n \times s}$, is a matrix  such that $B_i^{(r)}=L_i^{(r)}(L_i^{(r)})^T$. It then follows from \cite{Loboetal98} (see also \cite[page 277]{Nestrov})
that this linear matrix inequality can be equivalently written as second-order cone constraints. So, for a quadratically constrained optimization problem with linear objective function under restrictive ellipsoidal data uncertainty,
the sums-of-squares relaxation problem can be equivalently rewritten as a second order cone programming problem, and hence, exact second-order cone relaxation holds under the robust strict feasibility condition.

A second-order cone reformulation of a robust quadratic optimization problem with linear objective function under restrictive ellipsoidal data uncertainty was first shown in \cite{goldfab}. Our Corollary provides an alternative second-order cone reformulation for this class of problems.

\section{Conclusion}

In this paper, we studied robust solutions and semidefinite linear
programming (SDP) relaxations for SOS-convex polynomial optimization problems in the face of data uncertainty. We established sums-of-squares polynomial representations characterizing robust solutions and exact SDP-relaxations of robust SOS-convex optimization problem under various commonly used uncertainty sets.
It is easy to see from our SDP-relaxation results that the optimal value of a robust SOS-convex polynomial optimization problem can be found by solving a single semi-definite programming problem.

On the other hand, it is known that, using the Lasserre hierarchy  together with
a moment approach \cite{Lasserre}, we can get a sequence of points converging to a minimizer of the original polynomial optimization problem. Employing the moment approach of \cite{Lasserre,Lasserre_robust} and using our exact SDP relaxation results, one can get a sequence of points converging to a minimizer of a given robust SOS-convex polynomial optimization problem.


\begin{thebibliography}{99}

\bibitem{Parrilo} A.A. Ahmadi and P.A. Parrilo. A convex polynomial that is not SOS-convex,  {\it Math. Program.}, 135 (2012), 275--292.

%\bibitem{Parrilo1} A.A. Ahmadi and P.A. Parrilo. A complete characterization of the gap between

%convexity and SOS-convexity, preprint (2011), arXiv:1111.4587v1 [math.OC] .





\bibitem{las-hand} M. Anjos and J. B. Lasserre, \emph{Handbook of Semidefinite, Conic and Polynomial Optimization}, Springer, 2012.



%\bibitem{beck1}
%A. Beck, On the convexity of a class of quadratic mappings and its application to the problem of finding the smallest ball enclosing a given intersection of balls,
%\emph{J. Global Optim.} 39 (2007), 113--126.



\bibitem{beck} A. Beck and A. Ben-Tal, Duality in robust optimization, \emph{Oper. Res. Lett.}, 37 (2009), 1--6.



%\bibitem{Auslender}  A. Auslender,  M. Teboulle,  \textit{Asymptotic cones and functions in optimization and variational inequalities}. Springer Monographs in Mathematics. Springer-Verlag, New York, 2003.



\bibitem{BeKl02}  E. G. Belousov and D. Klatte, A Frank-Wolfe type theorem for convex
polynomial optimization problems, {\it Comp. Optim.  Appl.,}
{\bf 22} (2002),  37--48.



\bibitem{convexA} G. Blekherman, P. A. Parrilo and R. Thomas, \emph{Semidefinite Optimization and
Convex Algebraic Geometry}, SIAM Publications, 2012.



%\bibitem{sos} G. Blekherman, Nonnegative polynomials and sums of squares, arXiv:1010.3465,
% (to appear in the \emph{Journal of the AMS} (2012).



\bibitem{robust} A. Ben-Tal, L. El Ghaoui and A. Nemirovski, \emph{Robust Optimization}, Princeton Series in Applied Mathematics, 2009.



\bibitem{BV} S. Boyd and L. Vandenberghe,  \textit{Convex Optimization},
Cambridge University Press, Cambridge (2004).



\bibitem{BN1} A. Ben-Tal and A. Nemirovski, Robust convex optimization,  \emph{Math. Oper. Res.}, {\bf 23} (1998),  769--805.



%\bibitem{BN2} A. Ben-Tal and A. Nemirovski, Robust solutions of uncertain linear programs. \emph{Oper. Res. Lett.},
%{\bf 25}, (1999), 1--13.



%\bibitem{ben} A. Ben-Tal and A. Nemirovski, \textit{Lectures on Modern Convex Optimization: Analysis, Algorithms and Engineering Applications}, SIAM Publications, 2001.



% \bibitem{BJ2} R.S. Burachik,  and V. Jeyakumar,  A simple closure condition for

% the normal cone intersection formula, \emph{Proc. Amer. Math.

% Soc}., \textbf{133}, 1741-1748 (2005).



%

% \bibitem{Borwein} J.M. Borwein, Q.J. Zhu,  \textit{Techniques of variational analysis}. CMS Books in Mathematics/Ouvrages de Mathmatiques de la SMC, 20. Springer-Verlag, New York, 2005.



% \bibitem{Farkas} J. Farkas,  , \"{U}ber die Theorie der Einfachen Ungleichungen, {\it Journal f\"{u}r die Reine und Angewandte Mathematik}, 124 (1902) 1-27.





% \bibitem{BJ2} R.S. Burachik, and V. Jeyakumar,  A new geometric condition for

% Fenchel's duality in infinite dimensional spaces,

% {\it Math. Program.}, \textbf{104} (2005), 229-233.



\bibitem{jeya-goberna} M. A. Goberna, V. Jeyakumar, G. Li and M. Lopez,  Robust linear semi-infinite programming duality, \emph{Math. Program}, {\bf 139} (2013), 185--203.



\bibitem{goldfab} D. Goldfarb, and G. Iyengar, Robust convex quadratically constrained programs, \emph{Math. Program.}, {\bf 97} (2003), 495--515.



% \bibitem{ha} H. V. Ha and T. S. Pham, Representation of positive polynomials and optimization on noncompact semialgebraic sets, \textit{SIAM J. Optim}, {\bf 20} (2010), 3082-3103.

% \bibitem{Nie0} J.W. Helton and J.W. Nie, Structured Semidefinite Representation of Some Convex Sets

% , \textit{Proceedings of 47th IEEE Conference on Decision and Control}, 4797-4800, Cancun, Mexico, Dec. 9-11, 2008.



\bibitem{HeNie10}  J.W. Helton and J.W. Nie, Semidefinite representation of convex sets,
{\it Math. Program.}, \textbf{122} (2010), 21--64.



%\bibitem{henrion} D. Henrion and J. B. Lasserre, Convergenet relaxations of polynomial matrix inequalities and static output feedback,
% {\it IEEE Trans. Automat. Cont.}, {\bf 51} (2006), 192--202.





%\bibitem{jeyaOPL} V. Jeyakumar, A note on strong duality in convex semidefinite optimization: necessary and sufficient conditions, {\em Optim. Lett.} {\bf 2} (2008), 15-25.



%\bibitem{Jeya_Siam} V. Jeyakumar, G. M. Lee and N. Dinh, New sequential Lagrange

%multiplier conditions characterizing optimality without constraint

%qualification for convex programs, \textit{SIAM Journal on  Optimization} {\bf 14}

%(2003), 534-547.



\bibitem{jl-siam} V. Jeyakumar and G. Li, Strong duality in robust convex programming: complete characterizations, \emph{SIAM J. Optim.}, {\bf 20} (2010), 3384--3407.



\bibitem{jeya-boris} V. Jeyakumar, G. Y. Li, B. S. Morduckhovich and J. H. Wang,  Robust best approximation with 	interpolation constraints under ellipsoidal uncertainty: Strong duality and nonsmooth Newton's 	method, \emph{Nonlinear Analysis, T, M\&A} , {\bf 81} (2013), 1--11.



\bibitem{jeya-li-wang} V. Jeyakumar, G. Li and J. H. Wang,  Some robust convex programs without a duality gap, \emph{J. Convex Anal.}, {\bf 20} (2) (2013).



%\bibitem{JN} V. Jeyakumar,  and M. Nealon,  Complete characterizations of

%optimality for convex semidefinite programming, \emph{Canadian

%Math. Soc. Conf. Proc}, \textbf{27}  (2000), 165-173.



%\bibitem{jw} V. Jeyakumar and H. Wolkowicz, Generalizations of Slater's

%constraint qualification for infinite convex programs, \emph{Math.

%Progr.}, \textbf{57}, 85-102 (1992).



%\bibitem{jeya08} V. Jeyakumar, Constraint qualifications characterizing Lagrangian duality in convex optimization, \textit{J. Optim. Theor. Appl.} (136) (2008), 31-41.



% \bibitem{Jeya_OPTL} V. Jeyakumar,

% A note on strong duality in convex semi-definite optimization: necessary and sufficient conditions.

% \textit{Optim. Lett.}, {\bf 2} (2008),  15-25.



% \bibitem{farkas} V. Jeyakumar, Farkas Lemma: Generalizations, \emph{Encylopedia of optimization},

% \textbf{2}, 87-91, Kluwer Boston, USA, 2000.



%\bibitem{klerk} E. de Klerk, Aspects of Semidefinite Programming: Interior Point Algorithms and Selected Applications, Vol. 65, Applied Optimization Series, Kluwer Academic Publishers, 2002.



% \bibitem{Laurent} E. de. Klerk, M. Laurent and P.Parrilo,  {\it On the equivalence of algebraic approaches to the minimization of forms on the simplex}. In D. Henrion \& A. Garulli (Eds.), Positive Polynomials in Control. Berlin: Springer Verlag. (LNCIS, 312), 2005.



\bibitem{Laurent1} E. de. Klerk and M. Laurent, On the Lasserre hierarchy of semidefinite programming relaxations of convex polynomial optimization problems. {\it SIAM J. Optim.} {\bf 21} (2011),  824--832,



% \bibitem{Thera} H. V. Ngai and M. Th\'{e}ra, Error bounds for convex differentiable inequality systems in Banach spaces.  {\it Math. Program.},  104  (2005),  no. 2-3, Ser. B,  465-482.





\bibitem{Lasserre} J.B. Lasserre, {\it Moments, Positive Polynomials and Their Applications}, Imperial College Press, 2009.



% \bibitem{Lassere1} J.B. Lasserre,  Representation of nonnegative convex polynomials, {\it Arch. Math.} {\bf 91} (2008), 126--130.

\bibitem{Lasserre_robust} J.B. Lasserre, Robust global optimization with polynomials, {\it Math. Program.}, {\bf 107}(2006), 275–293.

\bibitem{Lasserre2} J.B. Lasserre, Convexity in semialgebraic geometry and polynomial optimization, {\it SIAM J. Optim.}, {\bf 19} (2009), 1995--2014.

%

%\bibitem{Lasserre-moments}  J. B. Lasserre, Global optimization with polynomials and the problem of moments, \textit{SIAM J. Optim.}, {\bf 11} (2000/01), 796-817.



%\bibitem{Lasserre2009} J. B. Lasserre, Convex sets with semidefinite representation, \textit{Math. Program}, Ser A, {\bf 120} (2009), 457-477.

%%\bibitem{Nsdp} F. Leibfritz and E. M. E. Mostafa,  An interior point constrained trust region method for a special class of nonlinear semidefinite programming problems, \textit{SIAM J. Optim.}, {\bf 12} (2002), 1048--1074.
\bibitem{Laurent_survey} M. Laurent,  Sums of squares, moment matrices and optimization over polynomials. Emerging applications of algebraic geometry, 157--270, \textit{IMA Vol. Math. Appl.}, 149, Springer, New York, 2009.
\bibitem{Loboetal98} M.S. Lobo, L. Vandenberghe, S. Boyd and H. Lebret, Applications of second-order cone programming, \emph{Linear Algebra Appl.}, 284 (1998), 193--228.

\bibitem{Nestrov} Y. Nesterov and A. Nemirovskii, {\it Interior-Point Polynomial Algorithms in Convex Programming}, SIAM,  Philadelphia, 199	4.

\bibitem{nie2012} J.~Nie, Optimality conditions and finite convergence of Lasserre's hierarchy,  \emph{Math. Prog.}, DOI: 10.1007/s10107-013-0680-x, (2013).
\end{thebibliography}
\end{document}